\documentclass[12pt,reqno]{amsart}

\usepackage[square,numbers]{natbib}              % BiBTeX style

\usepackage[centertags]{amsmath}        % AMS Stuff
\usepackage{amsfonts}
\usepackage{amssymb}
\usepackage{amsthm}
\usepackage{psfrag}                     % PSFrag (LaTeX style labels)
\usepackage{dcolumn}                    % Line up at decimal point in a table
\usepackage{graphicx}                   % Graphics stuff
\usepackage{color}                      % Colours
\usepackage[small]{caption}             % Captions
\usepackage[it]{subfig}                 % Subfigures
\usepackage{enumerate}                  % Enumerated lists
\usepackage{mathrsfs}                   % Serif fonts
\usepackage{longtable}                  % Stretch tables
\usepackage{lscape}                     % Landscape mode
\usepackage{afterpage}                  % Clear a page
\usepackage[figuresright]{rotating}     % Rotate a figure

\usepackage[all,cmtip]{xy}
\usepackage{comment}
\usepackage{mathdots}

\usepackage{mathrsfs}
\usepackage[all]{xy}
\DeclareMathAlphabet{\mathpzc}{OT1}{pzc}{m}{it}

\usepackage{tikz}
\usetikzlibrary{arrows,decorations.pathmorphing,decorations.pathreplacing,positioning,shapes.geometric,shapes.misc,decorations.markings,decorations.fractals,calc,patterns}

\usepackage{graphicx}

%\usepackage{setspace}
%\doublespacing

%The following line gives better baseline alignment in XYpic diagrams
%
\entrymodifiers={+!!<0pt,\fontdimen22\textfont2>}

%The following lines make it possible to doctor the margins
%The default \textwidth is (close to) 127mm.
\setlength{\textwidth}{149mm}
%The default \textheight is (close to) 206mm.
\setlength{\textheight}{231mm}
\addtolength{\topmargin}{-9mm}
\addtolength{\oddsidemargin}{-11mm}
\addtolength{\evensidemargin}{-11mm}

%Let equation numbers have the form (1.a), (1.b), (2.a), ...
%
%\numberwithin{equation}{section}
%\renewcommand{\theequation}{\arabic{section}.\alph{equation}}

%Let subsection numbers have the form (1.a), (1.b), ..., (2.a), (2.b), ...
%

%Use Roman numbering in enumerate environment
%

\newcommand\xqed[1]{%
  \leavevmode\unskip\penalty9999 \hbox{}\nobreak\hfill
  \quad\hbox{#1}}
\newcommand\demo{\xqed{$\diamond$}}

\def\cA{\mathscr{A}}

\def\cC{\mathscr{C}}

\def\cR{\mathscr{R}}
\def\cS{\mathscr{S}}

\def\BC{\mathbb{C}}

\def\BN{\mathbb{N}}

\def\BZ{\mathbb{Z}}

\def\fD{\mathfrak{D}}

\def\fP{\mathfrak{P}}

\def\fa{\mathfrak{a}}
\def\fb{\mathfrak{b}}

\def\sC{\mathsf{C}}
\def\sD{\mathsf{D}}

\def\add{\operatorname{add}}

%The following line makes ascending dots for use in Auslander-Reiten quivers
\def\adots{\mathinner{\mkern1mu\raise1.0pt\vbox{\kern7.0pt\hbox{.}}\mkern2mu\raise4.0pt\hbox{.}\mkern2mu\raise7.0pt\hbox{.}\mkern1mu}}

\def\ast{{\textstyle *}}

\def\Gr{\operatorname{Gr}}
\def\dim{\operatorname{dim}}

\def\dddots{\mathinner{\mkern1mu\raise10.0pt\vbox{\kern7.0pt\hbox{.}}\mkern2mu\raise5.3pt\hbox{.}\mkern2mu\raise1.0pt\hbox{.}\mkern1mu}}
\def\dddotssmall{\mathinner{\mkern1mu\raise7.0pt\vbox{\kern7.0pt\hbox{.}}\mkern-1mu\raise4pt\hbox{.}\mkern-1mu\raise1.0pt\hbox{.}\mkern1mu}}

\def\dim{\operatorname{dim}}

\def\End{\operatorname{End}}

\def\Ext{\operatorname{Ext}}

\def\Gr{\mathsf{Gr}}

\def\Hom{\operatorname{Hom}}

\def\Image{\operatorname{Im}}

\def\ind{\operatorname{ind}}

\def\Ker{\operatorname{Ker}}

\def\mod{\mathsf{mod}}
\def\Mod{\mathsf{Mod}}

\def\obj{\operatorname{obj}}

\def\SL2{\operatorname{SL}_2}

\def\Vect{\mathsf{Vect}}

%Let subsection numbers have the form (1.a), (1.b), ..., (2.a), (2.b), ...
%

%Use Roman numbering in enumerate environment
%

%The following defines environments for Theorems, Lemmas, etc.
%
\newtheorem{Lemma}{Lemma}[section]
%For simple numbering of Theorems, Lemmas etc., uncomment the
%following line
%\renewcommand{\theLemma}{\arabic{Lemma}}
\newtheorem{Theorem}[Lemma]{Theorem}
\newtheorem{Proposition}[Lemma]{Proposition}
\newtheorem{Corollary}[Lemma]{Corollary}

\theoremstyle{definition}
\newtheorem{Definition}[Lemma]{Definition}
\newtheorem{Setup}[Lemma]{Setup}

\newtheorem{Remark}[Lemma]{Remark}

\newtheorem{Example}[Lemma]{Example}

%----------------------------------------------------------------------------
% Fancy headers.  See the fancyhdr documentation of a full explanation of what
% these are.  R=Right hand side of page, L=Left hand side of page

%------------------------------------------------------------------------------

%------------------------------------------------------------------------------
% Make all upper-case Greek letters slanted.  Just comment these if you prefer
% upright Greek letters.
\DeclareMathSymbol{\Gamma}{\mathalpha}{letters}{"00}
\DeclareMathSymbol{\Delta}{\mathalpha}{letters}{"01}
\DeclareMathSymbol{\Theta}{\mathalpha}{letters}{"02}
\DeclareMathSymbol{\Lambda}{\mathalpha}{letters}{"03}
\DeclareMathSymbol{\Xi}{\mathalpha}{letters}{"04}
\DeclareMathSymbol{\Pi}{\mathalpha}{letters}{"05}
\DeclareMathSymbol{\Sigma}{\mathalpha}{letters}{"06}
\DeclareMathSymbol{\Upsilon}{\mathalpha}{letters}{"07}
\DeclareMathSymbol{\Phi}{\mathalpha}{letters}{"08}
\DeclareMathSymbol{\Psi}{\mathalpha}{letters}{"09}
\DeclareMathSymbol{\Omega}{\mathalpha}{letters}{"0A}
%------------------------------------------------------------------------------

 % sans-serif slanted
 % sans-serif bold maths
 % sans-serif bold maths, italic

%------------------------------------------------------------------------------
% Various miscellaneous new commands.

%------------------------------------------------------------------------------
% Theorems, corollaries, etc.
% Here, theorems, lemmas, corollaries, and propositions are numbered within
% chapters; proofs, definitions, examples, and remarks are not numbered.  Just
% alter the commands below if you would prefer different numberings or no
% numbering at all.

% Numbered.

% Not numbered.
\theoremstyle{definition}

\theoremstyle{definition}

\theoremstyle{definition}

%------------------------------------------------------------------------------

%------------------------------------------------------------------------------
% For multiletter symbols.
 % Real
 % Imaginary
  % Reynolds number
   % Hartmann number
   % Taylor number
 % Prandtl number
  % Peclet number
 % Airy function
 % Airy function
%------------------------------------------------------------------------------

%------------------------------------------------------------------------------
% Array strut to make delimiters come out right size both ends.
\newsavebox{\astrutbox}
\sbox{\astrutbox}{\rule[-5pt]{0pt}{20pt}}

%------------------------------------------------------------------------------

\begin{document}

\setlength{\parindent}{0pt}
\setlength{\parskip}{6pt}
%The default \baselineskip is close to 4.8mm
%\setlength{\baselineskip}{5.3mm}

\title[A Caldero-Chapoton map depending on a torsion class]
{A Caldero-Chapoton map depending on a torsion class}

\author{Thomas A. Fisher}
\address{School of Mathematics and Statistics,
Newcastle University, Newcastle upon Tyne, NE1 7RU,
United Kingdom}
\email{t.a.fisher@ncl.ac.uk}

%\author{Next author goes here}
%\address{Next author's postal address goes here}
%\email{Next author's mail address goes here}

%\keywords{$2$-Calabi-Yau category, cluster structure, cluster tilting
%  subcategory, derived category, Differential Graded algebra,
%  Differential Graded module, Fomin-Zelevinsky mutation, spherical
%  object, quiver, weak cluster tilting subcategory}

%\subjclass[2000]{16E45, 16G20, 16G70}

%Our LaTeX packages do not permit the 2010 math subject label.  Hence
%the following construction.  We have also put the keywords in a \thanks
%to get the right order (subject class first, then keyworks).

\thanks{2010 {\em Mathematics Subject Classification. }05E10, 13F60, 18E30}

\thanks{{\em Key words and phrases. }Cluster category, polygon dissection, Ptolemy diagram, torsion class, triangulated category}

%13F60: Cluster algebras
%16E10: Homological dimension
%16E45: Differential graded algebras and applications
%16G10: Representations of Artinian rings 
%16G20: Representations of quivers and partially ordered sets
%16G60: Representation type (finite, tame, wild, etc.) 
%16G70: Auslander-Reiten sequences (almost split sequences) and
%       Auslander-Reiten quivers
%18E30: Derived categories, triangulated categories
%18E35: Localization of categories 
%18G99: Homological algebra: None of the above, but in this section 
%55P62: Rational homotopy theory

\maketitle

\begin{abstract}
Frieze patterns of integers were studied by Conway and Coxeter, see \citep{CCI} and \citep{CCII}. Let $\cC$ be the cluster category of Dynkin type $A_n$. Indecomposables in $\cC$ correspond to diagonals in an $(n+3)$-gon. Work done by Caldero and Chapoton showed that the Caldero-Chapoton map (which is a map dependent on a fixed object $R$ of a category, and which goes from the set of objects of that category to $\BZ$), when applied to the objects of $\cC$ can recover these friezes, see \citep{CaCh}. This happens precisely when $R$ corresponds to a triangulation of the $(n+3)$-gon, i.e. when $R$ is basic and cluster tilting.  Later work (see \citep{BHJ}, \citep{HJROI}) generalised this connection with friezes further, now to $d$-angulations of the $(n+3)$-gon with $R$ basic and rigid. In this paper, we extend these generalisations further still, to the case where the object $R$ corresponds to a general Ptolemy diagram, i.e. $R$ is basic and $\add(R)$ is the most general possible torsion class (where the previous efforts have focused on special cases of torsion classes).
\end{abstract}

\setcounter{section}{-1}
\section{Introduction}
\label{sec:introduction}

In \citep{Cox}, \citep{CCI} and its direct sequel \citep{CCII}, frieze patterns of integers were considered by Conway and Coxeter. A frieze pattern of Dynkin type $A_n$ consists of $n+2$ infinite, interlacing rows of positive integers (with both the top and bottom row consisting entirely of ones) satisfying the so-called {\textit{unimodular rule}}, which says for all adjacent numbers forming a diamond in the frieze
\[ 
  \xymatrix @-2.0pc @! {
  & b & \\
  a & & d \\
  & c & \\
               }
\]
we have $ad-bc=1$. An example frieze lifted from \citep{CCI} is illustrated below.
\[ \def\objectstyle{\scriptstyle}
  \xymatrix @-1.5pc @! {
  & 1 & & 1 & & 1 & & 1 & & 1 & & 1 & & \cdots  \\
  \cdots & & 3 & & 1 & & 2 & & 2 & & 1 & & 3 & \\
  & 2 & & 2 & & 1 & & 3 & & 1 & & 2 & & \cdots \\
  \cdots & & 1 & & 1 & & 1 & & 1 & & 1 & & 1 &
               }
\] \\
Let $\cC$ be a category which is $\BC$-linear and Hom-finite, and let $c, R \in \obj\cC$. In \citep[Section 5]{CaCh}, Caldero and Chapoton discovered that the formula
$$\rho(c)=\chi ( \Gr\,Gc)$$
in a very special case recovers Conway-Coxeter friezes. Here,
$$G(-) = \Hom_\cC(R,-) : \cC \rightarrow \mod(B)$$
is a functor, where $B = \End_\cC(R)$ is the endomorphism algebra and $\mod$ is the category of finitely generated right modules, $\Gr\,Gc $ is the {\textit{module Grassmannian of submodules}} of $Gc$, and $\chi$ (defined by cohomology with compact support) is the {\textit{Euler characteristic}}, see \citep[Page 93]{Fult}. In particular, Caldero and Chapoton let the category $\cC$ be the cluster category $\sC(A_n)$ of Dynkin type $A_n$, see \citep[Section 1]{BMRRT} and \citep[Section 5]{CCS}. Now, indecomposables in $\sC(A_n)$ correspond to diagonals in an $(n+3)$-gon. Caldero and Chapoton let the object $R$ represent a triangulation; this is equivalent to $\add(R)$ being a cluster tilting subcategory of $\sC(A_n).$ The formula then recovered the combinatorial work of Conway and Coxeter, yielding friezes of type $A_n$, see \citep[Section 5]{CaCh}. 

In \citep{BHJ}, Bessenrodt, Holm and J{\o}rgensen generalised the work done by Conway and Coxeter \citep{CCI}, \citep{CCII} to $d$-angulations of polygons in a purely combinatorial way. In a later paper \citep{HJROI}, Holm and J{\o}rgensen proved that the Caldero-Chapoton formula gives the generalised friezes in \citep{BHJ} where now $R$ corresponds to a $d$-angulation, not a triangulation. See \citep{HJROII} for a generalisation to polynomial values. We remark that in both \citep{BHJ} and \citep{HJROI}, everything works for general polygon dissections, not just $d$-angulations, and that a polygon dissection corresponds to $R$ being a rigid object.

Now note that in the cases considered so far, $\add(R)$ is a torsion class in $\sC(A_n)$ (i.e. it is precovering and closed under extensions), albeit a rather special one. It is natural to consider the ``ultimate generalisation'' where $\add(R)$ is a general torsion class, and that is what we shall do in this paper. By \citep{HJR}, this means that $R$ corresponds to a so-called {\textit{Ptolemy diagram}}.

\begin{Definition}
Let $P = P_n$ be an $n$-gon with vertices labelled $1,2,\dots,n$. Let $\fD = \fD(P_n)$ be a set of diagonals of $P_n$. A Ptolemy diagram is an $n$-gon $P$ together with a set of diagonals which satisfy the following rule. Let $\{a_1,a_2\}$ and $\{b_1,b_2\}$ be two diagonals in $\fD$. If $\{a_1,a_2\}$ and $\{b_1,b_2\}$ cross, then all pairs $\{a_1,b_1\}$, $\{a_1,b_2\}$, $\{a_2,b_1\}$ and $\{a_2,b_2\}$ which are diagonals are also in $\fD$. \demo
\end{Definition}

\begin{Example}
 The following is an example of a Ptolemy diagram with eight vertices.  \\
\[
  \begin{tikzpicture}[auto]
    \node[name=s, shape=regular polygon, regular polygon sides=8, minimum size=5.5cm, draw] {}; 
    \draw[thick] (s.corner 2) to (s.corner 4);
    \draw[thick] (s.corner 1) to (s.corner 5);
    \draw[thick] (s.corner 5) to (s.corner 8);
    \draw[thick] (s.corner 2) to (s.corner 5);
    \draw[thick] (s.corner 1) to (s.corner 4);
        \draw[shift=(s.corner 1)] node[above] {$1$};
    \draw[shift=(s.corner 2)] node[above] {$2$};
    \draw[shift=(s.corner 3)] node[left] {$3$};
    \draw[shift=(s.corner 4)] node[left] {$4$};
    \draw[shift=(s.corner 5)] node[below] {$5$};
    \draw[shift=(s.corner 6)] node[below] {$6$};
    \draw[shift=(s.corner 7)] node[right] {$7$};
    \draw[shift=(s.corner 8)] node[right] {$8$};
  \end{tikzpicture} 
\] \\
Here, the diagonals can be given by the pairs of integers $\{2,4\},\{2,5\},\{1,4\},\{1,5\},$ and $\{5,8\}.$ \demo
\end{Example}

We also remind the reader of the definition of a {\textit{rigid object}}.

\begin{Definition}
Let $\cC$ be a $\BC$-linear triangulated category with suspension functor $\Sigma$. An object $R$ of $\cC$ is said to be rigid if it satisfies $\Hom_\cC(R,\Sigma R)=0$. \demo
\end{Definition}

%Let $R$ be an object of the cluster category of type $A_n$. Definition 2.1, Proposition 2.5, Proposition 2.7, and Proposition 2.3 of \citep{HJR} show that if $R$ is basic (that is, $R$ has no repeated indecomposables) and $\add(R)$ is closed under extensions, then $R$ corresponds to a Ptolemy diagram. Such an object $R$ makes $\add(R)$ a general torsion class. Let $P=P_n$ be an $n$-gon. A triangulation of $P$ corresponds to a cluster tilting object $R$. A dissection corresponds to a rigid object $R$. In either case, $\add(R)$ is a torsion class.
%
%In this paper, we consider the Caldero-Chapoton formula for a much more general object $R$ in the cluster category of type $A_n$ than those considered in \citep{CaCh} and \citep{HJROI}. The objects under consideration will correspond to Ptolemy diagrams. We will go on to provide via combinatorial means a method of computing $\rho(c)$ for indecomposables and produce even more general friezes than those seen in \citep{HJROI}. We wish to stress how natural it is to move toward considering the Caldero-Chapoton map in terms of more and more general objects $R$. We have previous efforts \citep{CaCh}, which studied triangulations corresponding to $\add(R)$ being a cluster tilting subcategory, and \citep{HJROI}, which studied dissections corresponding to $\add(R)$ being a rigid subcategory.

\section{Background material}
\label{sec:Backgroundmat}

In this section, let $\cC = \sC(A_n)$, the cluster category of type $A_n$, where $n \geq 3$ is an integer.

\begin{Remark} \label{rem:cluscatintro}
It is well known (see \citep[Cor. 4.5(i)]{HAPPEL1}, \citep[p. 359 Table]{MYDPG}, \citep[Prop. 1.3]{BMRRT}) that the Auslander-Reiten quiver of $\cC$ is $\BZ A_n$ modulo a glide reflection, illustrated below with the example of $\sC(A_7)$.
\[ \def\objectstyle{\scriptstyle}
  \xymatrix @-0.85pc @! {
     \circ \ar@{.}[dd] \ar[dr] & & \circ \ar[dr] & & \circ \ar[dr] & & \circ \ar[dr] & & \circ \ar[dr] & & \circ \ar@{.}[dd] \\
     & \circ \ar[ur] \ar[dr] & & \circ \ar[ur] \ar[dr] & & \circ \ar[ur] \ar[dr] & & \circ \ar[ur] \ar[dr] & & \circ \ar[ur] \ar[dr] &  \\
     \circ \ar@{.}[dd] \ar[dr] \ar[ur] & & \circ \ar[dr] \ar[ur] & & \circ \ar[dr] \ar[ur] & & \circ \ar[dr] \ar[ur] & & \circ \ar[dr] \ar[ur] & & \circ \ar@{.}[dd] \\
     & \circ \ar[ur] \ar[dr] & & \circ \ar[ur] \ar[dr] & & \circ \ar[ur] \ar[dr] & & \circ \ar[ur] \ar[dr] & & \circ \ar[ur] \ar[dr] & \\
     \circ \ar@{.}[dd] \ar[ur] \ar[dr] & & \circ \ar[ur] \ar[dr] & & \circ \ar[ur] \ar[dr] & & \circ \ar[ur] \ar[dr] & & \circ \ar[ur] \ar[dr] & & \circ \ar@{.}[dd] \\
     & \circ \ar[ur] \ar[dr] & & \circ \ar[ur] \ar[dr] & & \circ \ar[ur] \ar[dr] & & \circ \ar[ur] \ar[dr] & & \circ \ar[ur] \ar[dr] & \\
     \circ \ar[ur] & & \circ \ar[ur] & & \circ \ar[ur] & & \circ \ar[ur] & & \circ \ar[ur] & & \circ \\
               }
\]

It has a coordinate system on it, detailed below.
\[ \def\objectstyle{\scriptstyle}
  \xymatrix @-2.2pc @! {
     \{6,8\} \ar@{.}[dd] \ar[dr] & & \{7,9\} \ar[dr] & & \{8,10\} \ar[dr] & & \{1,9\} \ar[dr] & & \{2,10\} \ar[dr] & & \{1,3\} \ar@{.}[dd] \\
     & \{6,9\} \ar[ur] \ar[dr] & & \{7,10\} \ar[ur] \ar[dr] & & \{1,8\} \ar[ur] \ar[dr] & & \{2,9\} \ar[ur] \ar[dr] & & \{3,10\} \ar[ur] \ar[dr] & \\
     \{5,9\} \ar@{.}[dd] \ar[dr] \ar[ur] & & \{6,10\} \ar[dr] \ar[ur] & & \{1,7\} \ar[dr] \ar[ur] & & \{2,8\} \ar[dr] \ar[ur] & & \{3,9\} \ar[dr] \ar[ur] & & \{4,10\} \ar@{.}[dd] \\
     & \{5,10\} \ar[ur] \ar[dr] & & \{1,6\} \ar[ur] \ar[dr] & & \{2,7\} \ar[ur] \ar[dr] & & \{3,8\} \ar[ur] \ar[dr] & & \{4,9\} \ar[ur] \ar[dr] & \\
     \{4,10\} \ar@{.}[dd] \ar[ur] \ar[dr] & & \{1,5\} \ar[ur] \ar[dr] & & \{2,6\} \ar[ur] \ar[dr] & & \{3,7\} \ar[ur] \ar[dr] & & \{4,8\} \ar[ur] \ar[dr] & & \{5,9\} \ar@{.}[dd] \\
     & \{1,4\} \ar[ur] \ar[dr] & & \{2,5\} \ar[ur] \ar[dr] & & \{3,6\} \ar[ur] \ar[dr] & & \{4,7\} \ar[ur] \ar[dr] & & \{5,8\} \ar[ur] \ar[dr] & \\
     \{1,3\} \ar[ur] & & \{2,4\} \ar[ur] & & \{3,5\} \ar[ur] & & \{4,6\} \ar[ur] & & \{5,7\} \ar[ur] & & \{6,8\} \\
               }
\]

In general, the coordinates are taken modulo $n+3$ and we have the following picture.
\[ \def\objectstyle{\scriptstyle}
  \xymatrix @-2.0pc @! {
     & & & \{1,n+2\} \ar[dr] & & \{2,n+3\} \ar[dr] & & \{3,1\} \ar@{.}[ddd] \\
     & & \iddots \ar[ur] \ar[dr] & & \iddots \ar[ur] \ar[dr] & & \iddots \ar[ur] &  \\
     & \{1,4\} \ar[ur] \ar[dr] & & \{2,5\} \ar[ur] \ar[dr] & & \{3,6\} \ar[ur] & &  \\
     \{1,3\} \ar@{.}[uuu] \ar[ur] & & \{2,4\} \ar[ur] & & \{3,5\} \ar[ur] & &  &  \\
               }
\]

The coordinate system used above in the Auslander-Reiten quiver has a very nice property. Namely, one can identify a coordinate pair with a diagonal in the $(n+3)$-gon such that if $a$ and $b$ are indecomposable objects with corresponding diagonals $\fa$ and $\fb$, then
$$\dim_{ \BC } \Ext^1_{ \cC }( a,b )
  = \left\{
      \begin{array}{cl}
        1 & \mbox{ if $\fa$ and $\fb$ cross, } \\[1mm]
        0 & \mbox{ if not. }
      \end{array}
    \right.
$$
See, e.g. \citep[Lemma 2.1, Thm. 4.4]{CCS}. \demo
\end{Remark}

\section{Properties of the Caldero-Chapoton Map}
\label{sec:CCProperties}

In this section, the requirement that $\cC = \sC(A_n)$ is relaxed, although, in particular, the results in this section apply to $\sC(A_n)$.

\begin{Setup} \label{set:rhoonadd}
Let $\cC$ be a $\BC$-linear, Hom-finite, triangulated category with an object $R$, and set $B=\End_\cC(R)$. Define the functor 
$$G(-) = \Hom_\cC(R,\Sigma(-)) : \cC \rightarrow \mod(B)$$
and define the (modified) Caldero-Chapoton map
$$\rho (c) = \chi ( \Gr\,Gc)$$
on the objects $c$ of $\cC$. Here, $\Gr\,Gc $ is the {\textit{module Grassmannian of submodules}} of $Gc$, and $\chi$ (defined by cohomology with compact support) is the {\textit{Euler characteristic}}, see \citep[Page 93]{Fult}. \demo
\end{Setup}

\begin{Lemma} \label{lem:rhoismult}
In the situation of Setup \ref{set:rhoonadd}, we have for $x,y \in \operatorname{obj} \cC,$ that $ \rho(x \oplus y) = \rho(x)\rho(y),$ see \citep[Corollary 3.7]{CaCh}.
\end{Lemma}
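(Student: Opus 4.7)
The plan is to reduce the claim to the multiplicativity of the Euler characteristic of module Grassmannians under direct sums. Since $G = \Hom_\cC(R,\Sigma(-))$ is additive (being the composition of the additive functors $\Sigma$ and $\Hom_\cC(R,-)$), we have a natural isomorphism $G(x \oplus y) \cong Gx \oplus Gy$ of $B$-modules. Setting $M = Gx$ and $N = Gy$, the lemma reduces to the assertion
\[
\chi\bigl(\Gr(M \oplus N)\bigr) \;=\; \chi(\Gr M)\cdot\chi(\Gr N).
\]

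For this identity I would follow the argument of \citep[Corollary 3.7]{CaCh}. Consider the morphism $\pi : \Gr(M \oplus N) \to \Gr M \times \Gr N$ sending a submodule $L \subseteq M \oplus N$ to the pair $(L \cap M,\, p_N(L))$, where $p_N$ is the projection onto $N$. Given $(U,V)$ in the image, the submodules $L$ with $L \cap M = U$ and $p_N(L) = V$ are precisely the graphs of $B$-linear maps $V \to M/U$, so the fibre $\pi^{-1}(U,V)$ is the affine space $\Hom_B(V, M/U)$.

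Now stratify $\Gr M \times \Gr N$ by the (upper semicontinuous) function $(U,V) \mapsto \dim_\BC \Hom_B(V, M/U)$. On each locally closed stratum the map $\pi$ restricts to a Zariski-locally trivial affine bundle. Since $\chi$ (defined via compactly supported cohomology) is additive on constructible partitions and multiplicative on affine bundles (an affine space having Euler characteristic $1$), summing over strata gives $\chi(\Gr(M \oplus N)) = \chi(\Gr M \times \Gr N) = \chi(\Gr M)\cdot \chi(\Gr N)$. Feeding this into the definition of $\rho$ yields $\rho(x \oplus y) = \rho(x)\rho(y)$.

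The main obstacle is verifying that $\pi$ really has the described fibres and that the stratification turns it into a Zariski-locally trivial affine bundle; this is standard but requires a careful check that the $B$-module structure on submodules of $M \oplus N$ is compatible with the graph description, and that local trivializations can be constructed on each stratum. An alternative route avoiding the stratification is the $\BC^*$-fixed-point method: scaling the $M$-summand defines an action on $\Gr(M \oplus N)$ whose fixed locus consists exactly of the decomposable submodules $U \oplus V$ and thus equals $\Gr M \times \Gr N$, whence $\chi(\Gr(M \oplus N)) = \chi(\Gr M \times \Gr N)$ by the equivariant formula.
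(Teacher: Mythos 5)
Your proof is correct and follows essentially the same strategy as the paper: both consider the map $\Gr\,G(x\oplus y) \to \Gr\,Gx \times \Gr\,Gy$ sending a submodule $L$ to the pair $(L \cap Gx,\, \pi L)$ and conclude by additivity and multiplicativity of $\chi$ for surjective constructible maps with affine fibres. The paper simply delegates the fibre analysis to \citep[Lemma 3.11]{CaCh} (stated there for Auslander-Reiten sequences but with a proof that carries over), whereas you have spelled out the graph description of the fibres, the stratification making the map an affine bundle, and an alternative $\BC^{*}$-fixed-point argument; these are reasonable elaborations of the same idea rather than a different route.
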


\begin{proof}
Consider the sequence of morphisms
$$
\xymatrix{
x \ar[rr]^-{\begin{pmatrix} 1 \\ 0  \end{pmatrix}} && x\oplus y \ar[rr]^-{\begin{pmatrix} 0 & 1  \end{pmatrix}} && y
}
$$
and apply $G$ to obtain
$$
\xymatrix{
Gx \ar[rr]^-{\iota} && Gx\oplus Gy \ar[rr]^-{\pi} && Gy
}
$$
where $\iota$ and $\pi$ are the natural injection and projection respectively. Apply $\Gr$ to obtain
$$
\xymatrix{
\Gr\,Gx && \ar[ll]^{} \Gr\,G(x\oplus y) \ar[rr]^{} && \Gr\,Gy
}
$$
$$
\xymatrix{
\iota^{-1}M && \ar@{|->}[ll]^{} M \ar@{|->}[rr]^{} && \pi M
}
$$
where $M \in \Gr\,G(x\oplus y)$. This yields $\Gamma$, a surjective constructible map with affine fibres, defined by
$$\Gamma : \Gr\,G(x \oplus y) \rightarrow (\Gr\,Gx) \times (\Gr\,Gy), M \mapsto (\iota^{-1}M, \pi M),$$
see \citep[Lemma 3.11]{CaCh} which is stated for Auslander-Reiten sequences but has a proof which works in our situation. Taking $\chi$ of this gives the result, namely that 
$$\chi(\Gr\,G(x \oplus y)) = \chi( (\Gr\,Gx) \times (\Gr\,Gy)) = \chi(\Gr\,Gx) \chi (\Gr\,Gy)$$
$$\iff \rho(x \oplus y) = \rho(x)\rho(y),$$
see \citep[p. 93, Exercise]{Fult}.
\end{proof}

\begin{Lemma} \label{lem:splitepi}
If
$$\xymatrix{ \tau r \ar[r] & y \ar[r]^-{\upsilon} & r \ar[r]^-{\sigma} & \Sigma\tau r }$$
is an Auslander-Reiten triangle and
$$\xymatrix{ m \ar[r] & z \ar[r]^-{\zeta} & r \ar[r]^-{\delta} & \Sigma m }$$
is a distinguished triangle with $\delta \neq 0$, then $\exists h : \Sigma m \rightarrow \Sigma\tau r$ such that $\sigma = h \circ \delta$. 
\end{Lemma}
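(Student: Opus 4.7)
The plan is to combine the characteristic factorisation property of Auslander--Reiten triangles with the morphism-of-triangles axiom (TR3).

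First, I would argue that $\zeta$ is not a split epimorphism. Indeed, if $\zeta$ admitted a section $s : r \to z$, then the composition-zero identity for distinguished triangles would give
$$\delta = \delta \circ \mathrm{id}_r = \delta \circ \zeta \circ s = 0 \circ s = 0,$$
contradicting the hypothesis $\delta \neq 0$. Because $\zeta$ is a non-split epimorphism into the target $r$ of an Auslander--Reiten triangle, the universal property of that triangle supplies a factorisation
$$\zeta = \upsilon \circ \psi$$
for some $\psi : z \to y$.

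Next, I would rotate both triangles so that $r$ sits in the second position, obtaining the distinguished triangles
$$z \xrightarrow{\zeta} r \xrightarrow{\delta} \Sigma m \longrightarrow \Sigma z \quad\text{and}\quad y \xrightarrow{\upsilon} r \xrightarrow{\sigma} \Sigma\tau r \longrightarrow \Sigma y.$$
The pair $(\psi,\mathrm{id}_r)$ is a morphism between their first two terms, and the square commutes because $\upsilon \circ \psi = \zeta = \mathrm{id}_r \circ \zeta$. Applying axiom (TR3) extends this datum to a morphism of triangles, producing $h : \Sigma m \to \Sigma\tau r$ whose defining commutativity reads $h \circ \delta = \sigma \circ \mathrm{id}_r = \sigma$, which is the desired identity.

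I do not anticipate any genuine obstacle. The one point worth flagging is the rotation: without it, (TR3) applied to the two triangles as originally presented would require an input morphism $m \to \tau r$ (which we do not have) and would return a morphism at the third position that is not what we need. After rotating, the data we have ($\psi$ and $\mathrm{id}_r$) and the output we want (a morphism $\Sigma m \to \Sigma\tau r$ making $\sigma = h \circ \delta$) occupy exactly the right slots in (TR3).
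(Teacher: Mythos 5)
Your proof is correct and follows essentially the same route as the paper: both begin by observing that $\delta \neq 0$ forces $\zeta$ to be a non-split epimorphism, then use the defining lifting property of the Auslander--Reiten triangle to factor $\zeta = \upsilon \circ \psi$. The only cosmetic difference is in the final step, where you rotate both triangles and invoke (TR3), whereas the paper notes that $\sigma \circ \zeta = \sigma \circ \upsilon \circ \psi = 0$ and then uses the weak-cokernel property of the distinguished triangle $z \xrightarrow{\zeta} r \xrightarrow{\delta} \Sigma m$ to factor $\sigma$ through $\delta$; these are equivalent ways of extracting $h$.
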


\begin{proof}
Because $\delta \neq 0$, $\zeta$ is not a split epimorphism, and because 
$$\xymatrix{ \tau r \ar[r] & y \ar[r]^{\upsilon} & r \ar[r]^{\sigma} & \Sigma\tau r }$$
is an Auslander-Reiten triangle, there exists $f : z \rightarrow y$ such that $\upsilon \circ f = \zeta$. The following diagram illustrates this.
$$
\xymatrix{
& & m \ar[d] &  \\
& & z \ar[ld]_-{f} \ar[d]_-{\zeta} \ar[dr]^-{0} &  \\
\tau r \ar[r] & y \ar[r]_-{\upsilon} & r \ar[r]_-{\sigma} \ar[d]_-{\delta} & \Sigma\tau r \\
 &  & \Sigma m & 
}
$$
Now, because $\sigma \circ \zeta = \sigma \circ \upsilon \circ f = 0 \circ f = 0,$ there exists $h : \Sigma m \rightarrow \Sigma\tau r$ such that $\sigma = h \circ \delta,$ as shown below.
$$
\xymatrix{
& & m \ar[d] &  \\
& & z \ar[ld]_-{f} \ar[d]_-{\zeta} \ar[dr]^-{0} &  \\
\tau r \ar[r] & y \ar[r]_-{\upsilon} & r \ar[r]_-{\sigma} \ar[d]_-{\delta} & \Sigma\tau r \\
 &  & \Sigma m \ar[ur]_-{h} & 
}
$$
\end{proof}

\begin{Setup} \label{set:mandr}
Let $\cC$ be a triangulated, $\BC$-linear, 2-Calabi-Yau category. Let $R \in \cC$ be an object, and let $B=\End_{\cC}(R)$.

Moreover, let $m,r$ be indecomposable objects of $\cC$ satisfying 
$$\dim_\BC\Ext^1(r,m) = \dim_\BC\Ext^1(m,r) = 1$$
and $\Ext^1(R,r)=0$. \demo
\end{Setup}

\begin{Setup} \label{set:mandr2}
We have the following distinguished triangles
$$\xymatrix{
m \ar[r]^-{\mu} & a \ar[r] & r \ar[r]^-{\delta} & \Sigma m,
}
$$
$$\xymatrix{
r \ar[r]^{} & b \ar[r]^-{\beta} & m \ar[r]^-{\epsilon} & \Sigma r
}
$$
where $\delta$, $\epsilon$ $\neq 0$ and due to the one-dimensionality of the $\Ext^1$-spaces, these are the unique non-split extensions between $m$ and $r$ up to isomorphism.

Upon applying $G$ to the distinguished triangles and rolling, we obtain the following exact sequences in $\mod(B)$:
$$\xymatrix{
G(\Sigma^{-1}r) \ar[rr]^-{G(\Sigma^{-1}\delta)} && Gm \ar[r]^-{G\mu} & Ga \ar[r]^{} & 0,
}
$$
$$\xymatrix{
0 \ar[r]^{} & Gb \ar[r]^-{G\beta} & Gm \ar[r]^-{G\epsilon} & G(\Sigma r).
}
$$
The zeroes result from the condition $\Ext^1(R,r)=0$. \demo
\end{Setup}

\begin{Lemma} \label{lem:MsubmodGm}
Let $M$ be a submodule of $Gm$. Either $\Ker G\mu \subseteq M$ or $M \subseteq \Image G\beta$, but not both.
\end{Lemma}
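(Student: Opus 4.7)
The strategy is to split into cases on whether $G\epsilon|_M$ vanishes or not. If $M \subseteq \Ker G\epsilon$, then by the second exact sequence in Setup \ref{set:mandr2} we have $M \subseteq \Image G\beta$, so the second alternative holds. Otherwise pick $x \in M$ with $G\epsilon(x) = \Sigma\epsilon \circ x \neq 0$; the goal is then to deduce $\Ker G\mu \subseteq M$. The central tool is Lemma \ref{lem:splitepi} together with the 2-Calabi-Yau hypothesis on $\cC$. The latter implies $\tau = \Sigma$, so the Auslander-Reiten triangle ending at $r$ has the form $\Sigma r \to y \xrightarrow{\upsilon} r \xrightarrow{\sigma} \Sigma^2 r$. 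Applying Lemma \ref{lem:splitepi} to the distinguished triangle $m \to a \to r \xrightarrow{\delta} \Sigma m$ of Setup \ref{set:mandr2} yields $h : \Sigma m \to \Sigma^2 r$ with $\sigma = h \circ \delta$. Since $\Hom_{\cC}(\Sigma m, \Sigma^2 r) \cong \Ext^1_{\cC}(m, r)$ is one-dimensional with generator $\Sigma\epsilon$, we must have $h = c\,\Sigma\epsilon$ for some nonzero $c \in \BC$. Consequently $\Sigma\epsilon \circ \delta$ is a nonzero scalar multiple of the AR map $\sigma$, so the composite $G\epsilon \circ G(\Sigma^{-1}\delta) : G(\Sigma^{-1} r) \to G(\Sigma r)$ is (up to a nonzero scalar) the map $f \mapsto \sigma \circ f$.

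From this identification the ``not both'' clause is immediate: were some $M$ to satisfy $\Ker G\mu \subseteq M \subseteq \Image G\beta$, then $\Ker G\mu \subseteq \Image G\beta$, forcing $\sigma \circ f = 0$ for every $f : R \to r$. By the defining property of the AR triangle, every such $f$ would then factor through $\upsilon$, i.e.\ no morphism $R \to r$ would be a split epimorphism; in the intended application, where $r$ is an indecomposable direct summand of $R$, this contradicts the existence of the projection $R \to r$.

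The residual --- and hardest --- step is to produce $\Ker G\mu \subseteq M$ from the single element $x \in M$ with $G\epsilon(x) \neq 0$. My plan is to exploit the $B$-module structure on $Gm$: since $M$ is a $B$-submodule it contains the cyclic submodule $xB$, and one would show that for every $f : R \to r$ there exists $\phi \in B = \End_{\cC}(R)$ with $\delta \circ f = x \circ \phi$, placing $\delta \circ f$ inside $xB \subseteq M$ and thereby exhausting $\Ker G\mu = \Image G(\Sigma^{-1}\delta)$. The required factorization would be produced by matching $\Sigma\epsilon \circ x$ with $\Sigma\epsilon \circ \delta \circ f = c^{-1} \sigma \circ f$ in $G(\Sigma r)$ and then lifting along the long exact sequences attached to $T_1$ and $T_2$ and the AR property at $r$. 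Achieving this diagrammatic lift --- most likely by an octahedral argument or joint use of the AR property and the 2-Calabi-Yau duality --- is the main obstacle.
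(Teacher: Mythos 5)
Your ``not both'' direction is essentially the paper's argument, repackaged as a direct contradiction rather than a contrapositive: you use Lemma \ref{lem:splitepi} to identify $\Sigma\epsilon\circ\delta$ with a nonzero multiple of the almost-split map $\sigma$, then observe that $\Ker G\mu\subseteq\Image G\beta$ would force $\sigma\circ f=0$ for all $f:R\to r$. You are in fact slightly more careful than the paper in flagging that this contradiction requires a split epimorphism $R\to r$ (i.e.\ $r\in\add R$), a point the paper elides when it passes from $\Sigma\epsilon\circ\delta\neq 0$ to $G(\epsilon\Sigma^{-1}\delta)\neq 0$.

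The ``either/or'' direction, however, has a genuine gap which you yourself flag. You correctly reduce the problem to producing, from a single $x\in M$ with $G\epsilon(x)\neq 0$, a factorization $\delta\circ f\in xB$ for every $f:R\to r$, and you correctly guess that this should follow from factoring $\delta$ through $x$. But you do not produce the factorization, and an octahedral argument is not the right tool. The missing ingredient is \emph{Serre duality}. The hypothesis $G\epsilon(x)=\Sigma\epsilon\circ x\neq 0$ says precisely that $x^{\ast}=\Hom(x,\Sigma^{2}r):\Hom(\Sigma m,\Sigma^{2}r)\to\Hom(R,\Sigma^{2}r)$ is nonzero. Applying $D=\Hom_{\BC}(-,\BC)$ and the $2$-Calabi-Yau isomorphisms $D\Hom(R,\Sigma^{2}r)\cong\Hom(r,R)$ and $D\Hom(\Sigma m,\Sigma^{2}r)\cong\Hom(r,\Sigma m)$ turns $x^{\ast}$ into $x_{\ast}=\Hom(r,x):\Hom(r,R)\to\Hom(r,\Sigma m)$, which is therefore nonzero; since $\dim_{\BC}\Hom(r,\Sigma m)=1$, the map $x_{\ast}$ is surjective, so there is $z:r\to R$ with $x\circ z=\delta$. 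Then for any $w:R\to r$ one gets $\delta\circ w=x\circ(z\circ w)=x\cdot(zw)\in M$, which is exactly the conclusion $\Image G(\Sigma^{-1}\delta)=\Ker G\mu\subseteq M$. Without this dualization step the argument does not close, so as written your proof is incomplete.
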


\begin{proof}
``Either/or'': Suppose $M \not \subseteq \Ker G\epsilon = \Image G\beta.$ Then we must show that $\Image G (\Sigma^{-1} \delta) = \Ker G\mu \subseteq M$.

Now, because $M \not \subseteq \Ker G\epsilon,$ there exists $x \in M$ such that $(G\epsilon)(x) \neq 0$. Note, $(G\epsilon)(x) = (\Sigma \epsilon)\circ x$ because after applying $G$ to the morphism $\epsilon : m \rightarrow \Sigma r$ we obtain $\Hom(R,\Sigma \epsilon) : (R,\Sigma m) \rightarrow (R, \Sigma^2 r).$

So now consider 
$$x^{\ast} = \Hom(x,\Sigma^2r) : \Hom(\Sigma m, \Sigma^2r) \rightarrow \Hom(R,\Sigma^2 r), \Sigma \epsilon \mapsto (\Sigma \epsilon) \circ x \neq 0$$ 
and consider the functor $D(-) = \Hom_\BC(-,\BC)$ applied to this, shown as the upper horizontal morphism in the commutative square below,
$$
\xymatrix{
D \Hom(R, \Sigma^2 r) \ar[rr]^-{D(x^{\ast})} \ar[dd]^{\cong} && D \Hom(\Sigma m, \Sigma^2 r) \ar[dd]^{\cong} \\ \\
\Hom(r,R) \ar[rr]_-{x_{\ast} = \Hom(r,x)} && \Hom(r,\Sigma m),
}
$$
which exists by Serre duality.

We have that $x^{\ast} \neq 0 \Rightarrow D(x^{\ast}) \neq 0 \Rightarrow x_{\ast} \neq 0 \Rightarrow x_{\ast}$ is surjective because $\dim_\BC\Hom(r,\Sigma m)=1$.

Since $x_{\ast}$ is onto, if $\delta : r \rightarrow \Sigma m$ is given then there exists $z : r \rightarrow R$ such that
$$
\xymatrix{
&& r \ar[ddll]_-{z} \ar[dd]^-{\delta = x_{\ast}(z)} \\ \\ 
R \ar[rr]_-{x} && \Sigma m
}
$$
is commutative.

Now, apply $G$ to $\Sigma^{-1}\delta : \Sigma^{-1}r \rightarrow m$ to obtain $\delta_{\ast} = \Hom(R,\delta) : \Hom(R,r) \rightarrow \Hom(R,\Sigma m)$ and note that requiring $\Image G(\Sigma^{-1}\delta) \subseteq M$ is equivalent to requiring that $\Image \delta_{\ast} \subseteq M$. 

So now let $w \in \Hom(R,r)$ and consider $\delta_{\ast}(w) = \delta \circ w$. The following diagram
$$
\xymatrix{
&& R \ar@/^-1.25pc/[ddddll]_-{z \circ w} \ar[dd]^-{w} \ar@/^2.50pc/[dddd]^-{\delta_{\ast}(w)} \\ \\
&& r \ar[ddll]_-{z} \ar[dd]^-{\delta} \\ \\
R \ar[rr]_-{x} && \Sigma m
}
$$
where $z$ exists by the above, helps illustrate that $$\delta_{\ast}(w) = \delta \circ w = x \circ z \circ w = x \cdot (z \circ w)$$ 
which is in $M$, as required, because $x \in M$ and $z \circ w \in B.$

``Not both'': There is an Auslander-Reiten triangle $\xymatrix{ \Sigma r \ar[r] & y \ar[r] & r \ar[r]^-{\sigma} & \Sigma^2 r,}$ and because $\xymatrix{r \ar[r]^{\delta} & \Sigma m} \neq 0,$ we have that $\sigma$ factors as $\xymatrix{r \ar[r]^-{\delta} & \Sigma m \ar[r]^-{\Psi} & \Sigma^2 r}$ by Lemma \ref{lem:splitepi}. Since $\Psi\delta \neq 0,$ we have that $\Psi \neq 0,$ and since $\Psi \in \Ext^1(\Sigma m, \Sigma r) \cong \Ext^1(m,r),$ which is one-dimensional, $\Psi$ is a nonzero scalar multiple of $\Sigma\epsilon$. This means that $\Sigma(\epsilon)\delta \neq 0,$ which implies $G(\epsilon \Sigma^{-1} \delta) \neq 0$. 

Now suppose $\Image G(\Sigma^{-1} \delta) \subseteq M.$ Apply $G\epsilon$ to give $\Image G(\epsilon \Sigma^{-1}\delta) \subseteq (G\epsilon)M$. Since $\Image G(\epsilon\Sigma^{-1} \delta)$ is nonzero, $(G\epsilon)M \neq 0, $ that is $M \not\subseteq \Ker G\epsilon$, as required.

\end{proof}

\begin{Proposition} \label{pro:rhomab}
We have $\rho(m) = \rho(a) + \rho(b)$.
\end{Proposition}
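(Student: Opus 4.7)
The plan is to use Lemma \ref{lem:MsubmodGm} to partition $\Gr\,Gm$ into two pieces whose Euler characteristics are $\rho(a)$ and $\rho(b)$ respectively, and then invoke the additivity of $\chi$ over disjoint unions of constructible subsets. Concretely, from the exact sequences in Setup \ref{set:mandr2} one has
\[
Ga \;\cong\; Gm/\Ker G\mu \qquad \text{and} \qquad Gb \;\cong\; \Image G\beta,
\]
so the two natural candidate subsets of $\Gr\,Gm$ are
\[
S_1 = \{\,M \in \Gr\,Gm \mid \Ker G\mu \subseteq M\,\},\qquad S_2 = \{\,M \in \Gr\,Gm \mid M \subseteq \Image G\beta\,\}.
\]

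First I would apply Lemma \ref{lem:MsubmodGm}: it says that every submodule of $Gm$ lies in exactly one of $S_1$ or $S_2$, so $\Gr\,Gm = S_1 \sqcup S_2$ as a set (and in fact $S_2$ is closed and $S_1$ is its open complement inside $\Gr\,Gm$, since ``$M \subseteq \Image G\beta$'' is a closed condition on the module Grassmannian). Next I would identify the two pieces as varieties. For $S_1$, the correspondence theorem gives a bijection
\[
S_1 \;\longrightarrow\; \Gr(Gm/\Ker G\mu) \;\cong\; \Gr\,Ga, \qquad M \longmapsto M/\Ker G\mu,
\]
which is an isomorphism of varieties (in each fixed dimension). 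For $S_2$, the inclusion $\Image G\beta \hookrightarrow Gm$ induces an isomorphism of varieties $\Gr\,Gb \cong \Gr(\Image G\beta) \xrightarrow{\sim} S_2$.

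Finally I would combine the pieces: using additivity of $\chi$ (via cohomology with compact support) over the disjoint union of a closed subset and its open complement, one obtains
\[
\rho(m) \;=\; \chi(\Gr\,Gm) \;=\; \chi(S_1) + \chi(S_2) \;=\; \chi(\Gr\,Ga) + \chi(\Gr\,Gb) \;=\; \rho(a) + \rho(b),
\]
which is the desired identity.

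The main obstacle I anticipate is the bookkeeping in step two: verifying that the set-theoretic bijections $S_1 \leftrightarrow \Gr\,Ga$ and $S_2 \leftrightarrow \Gr\,Gb$ are genuine isomorphisms of constructible sets (not merely bijections), so that $\chi$ transports correctly. This relies on $Ga \cong \Coker G(\Sigma^{-1}\delta)$ and $Gb \cong \Ker G\epsilon$, which follow from the two exact sequences in Setup \ref{set:mandr2}, and on the standard fact that the correspondence between submodules of a module containing a fixed submodule and submodules of the quotient is algebraic.
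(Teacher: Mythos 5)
Your proof is correct and follows essentially the same route as the paper: both use Lemma \ref{lem:MsubmodGm} to decompose $\Gr\,Gm$ into the two pieces $S_1$ and $S_2$, identify them with $\Gr\,Ga$ and $\Gr\,Gb$ via $M \mapsto M/\Ker G\mu$ and $\Gr(\Image G\beta) \cong \Gr\,Gb$, and conclude by additivity of $\chi$ over a constructible decomposition. The paper phrases the identification as constructible injections $\Gr\,Gb \hookrightarrow \Gr\,Gm$ and $\Gr\,Ga \hookrightarrow \Gr\,Gm$ (using linearity of $G\beta$ and $G\mu$) rather than your closed/open dichotomy, but this is the same argument with the same ingredients.
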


\begin{proof}
Take the two exact sequences
$$\xymatrix{
0 \ar[r]^{} & Gb \ar[r]^-{G\beta} & Gm \ar[r]^-{G\epsilon} & G(\Sigma r),
}
$$
$$\xymatrix{
G(\Sigma^{-1}r) \ar[rr]^-{G(\Sigma^{-1}\delta)} && Gm \ar[r]^-{G\mu} & Ga \ar[r]^{} & 0
}
$$
from Setup \ref{set:mandr2} and apply $\Gr$ to obtain the following morphisms of algebraic varieties.
$$\xymatrix{
\Gr\,Gb \ar@{^{(}->}[r] & \Gr\,Gm & \Gr\,Ga \ar@{_{(}->}[l]
}
$$
$$\xymatrix{
B \ar@{{|}->}[r] & (G\beta)(B) & 
}
$$
$$\xymatrix{
 & (G\mu)^{-1}(A) & A \ar@{{|}->}[l]
}
$$
Lemma \ref{lem:MsubmodGm} implies that $\Gr\,Gm$ is the disjoint union of $\Gr\,Ga$ and $\Gr\,Gb$. Since $G\beta$ and $G\mu$ are linear, the morphisms 
$$\xymatrix{\Gr\,Gb\, \ar@{^{(}->}[r] & \Gr\,Gm &\textrm{and}& \Gr\,Gm & \, \Gr\,Ga \ar@{_{(}->}[l]}$$
are constructible, and so their images inside $\Gr\,Gm$ are constructible subsets. Therefore the Euler characteristic of the disjoint union is the sum of the Euler characteristics, by \citep[p. 92, item (3)]{Fult}.
\end{proof}
 
\section{Ptolemy Diagrams and an Inductive Procedure for computing $\rho$}
\label{sec:PtolDiags}

Now and throughout the rest of this paper, let $\cC = \sC(A_n)$ be the cluster category of Dynkin type $A_n$, and recall Remark \ref{rem:cluscatintro} which describes some of its important properties, including its connection to the $(n+3)$-gon. Let $R$ be a basic object of $\cC$ and let $\cR$ be the full subcategory of $\cC$ given by $\add(R)$. Recall the definition $B = \End_\cC(R).$ 

\begin{Setup}
Let $P$ be an $(n+3)$-gon. As explained in Section \ref{sec:Backgroundmat}, there is a bijection between the indecomposable objects of $\cC$ and the diagonals of $P.$ We will think of edges and diagonals in $P$ in combinatorial terms by identifying an edge or a diagonal with its pair of endpoints, although it is worth keeping in mind the geometric interpretation of diagonals as line segments inside a geometric polygon. In this vein, let $P$ have vertex set $V_P = \{1,2,\dots,n+3\}$ and edge set $E_P =\{\{a,b\} \hspace{1.5mm} | \hspace{1.5mm} a,b \in V_P, \, |b-a| \in \{1,n+2\}\}$. Let $D_P = \{\{a,b\} \hspace{1.5mm} | \hspace{1.5mm} a,b \in V_P, \, |b-a| \geq 2, \{1,n+3\} \not\in D_P\}$ be the set of diagonals of $P$ and $D_P'$ be a subset of $D_P$ consisting of diagonals which are pairwise non-crossing. If in a given context, $P$ is fixed, then whilst working we drop the subscript notation and write $V, E, D, D'$ respectively.

Suppose $P$ and $D'$ are fixed. The diagonals in $D'$ are then referred to as {\textit{dissecting diagonals.}} The diagonals in $D'$ partition the $(n+3)$-gon into subpolygons called {\textit{cells}}. Note, if $D'$ has been chosen such that it is maximal, then the corresponding diagonals triangulate the $(n+3)$-gon, and each cell is a triangle. Conversely, if $D'$ is the empty set, then the corresponding partition yields a single cell, namely $P$ itself. 

For a particular choice of $D'$, we assign the corresponding cells labels $C_1,C_2,\dots,C_m$. Then we assign each cell $C_i$ a subset $T_i$ of $D$ as follows. Either $T_i$ is empty, or it is the subset of $D$ containing all interior diagonals of $C_i$. A nonempty cell is called a {\textit{clique}} and the interior diagonals are referred to as {\textit{clique diagonals}}. Note that when $C_i$ is a triangle it is both empty and a clique. Then the collection 
$$\fP = \displaystyle E \, \bigcup \, D' \, \bigcup_{i=1}^m \, T_i$$
is called a {\textit{Ptolemy diagram}} (see \citep[Definition 2.1]{HJR}) and an example of one is illustrated below for the case $n=5$. \\
\[
  \begin{tikzpicture}[auto]
    \node[name=s, shape=regular polygon, regular polygon sides=8, minimum size=6cm, draw] {}; 
    \draw[thick] (s.corner 2) to (s.corner 4);
    \draw[shift=(s.corner 3)] node[right=2pt] {$C_1$};
    \draw[shift=(s.side 4)] node[above=8pt] {$C_2$};
    \draw[shift=(s.side 8)] node[below=8pt] {$C_3$};
    \draw[shift=(s.side 6)] node[left=8pt] {$C_4$};
    \draw[thick] (s.corner 1) to (s.corner 5);
    \draw[thick] (s.corner 5) to (s.corner 8);
    \draw[thick,dotted] (s.corner 2) to (s.corner 5);
    \draw[thick,dotted] (s.corner 1) to (s.corner 4);
        \draw[shift=(s.corner 1)] node[above] {$1$};
    \draw[shift=(s.corner 2)] node[above] {$2$};
    \draw[shift=(s.corner 3)] node[left] {$3$};
    \draw[shift=(s.corner 4)] node[left] {$4$};
    \draw[shift=(s.corner 5)] node[below] {$5$};
    \draw[shift=(s.corner 6)] node[below] {$6$};
    \draw[shift=(s.corner 7)] node[right] {$7$};
    \draw[shift=(s.corner 8)] node[right] {$8$};
  \end{tikzpicture} 
\] \\
Here, $P$ is an octagon and $D' = \{\{2,4\}, \{1,5\}, \{5,8\}\}$ is the set of dissecting diagonals, illustrated above with solid lines. We also have that $T_i=\emptyset$ for $i \in \{1,3,4\}$ and for $i=2,$ $T_i$ consists of two diagonals, $\{2,5\}$ and $\{1,4\}$, illustrated above with dotted lines.

Any given cell can be uniquely identified by a subset of $V$. In the example above, $C_1$ is given by $\{2,3,4\},$ $C_2$ is given by $\{1,2,4,5\},$ $C_3$ is given by $\{1,5,8\}$ and $C_4$ is given by $\{5,6,7,8\}.$ \demo
\end{Setup}

\begin{Remark} \label{rem:Ptolemysetup}
Let $\fP$ be a Ptolemy diagram. Take the collection of all the diagonals in $D'$ and $T_i$ for each $i$ and set the object $R \in \obj\cC$ to be the direct sum of the corresponding indecomposables. By \citep{HJROI}, the subcategory $\add(R)$ is a torsion class of $\cC,$ and each torsion class has this form. We henceforth use this $R$ in the Caldero-Chapoton map $\rho$ from Setup \ref{set:rhoonadd}.

Now let $r$ be any indecomposable summand of $R$. If $r$ corresponds to a dissecting diagonal, then because a dissecting diagonal crosses no other diagonal in $\fP,$ we have that $\Ext^1(R,r)=0.$ If $m \in \ind(\cC)$ is such that 
$$\dim_\BC\Ext^1(r,m) = \dim_\BC\Ext^1(m,r) = 1$$
then Proposition \ref{pro:rhomab} and \citep[Figure 5]{HJROI} together say that $\rho,$ the Caldero-Chapoton map from Setup \ref{set:rhoonadd}, satisfies $\rho(m)=\rho(a)+\rho(b)$ where $a = a' \oplus a''$ and $b = b' \oplus b''$ have diagonals corresponding to

\[
\vspace{0.9mm}
  \begin{tikzpicture}[auto]
    \node[name=s, shape=regular polygon, regular polygon sides=8, minimum size=6cm, draw] {}; 
    \draw[thick] (s.corner 4) to node[pos=0.3] {$m$} (s.corner 8);
    \draw[thick,dashed] (s.corner 2) to node[pos=0.5,left=1pt] {$a''$} (s.corner 4);
    \draw[thick,dashed] (s.corner 2) to node[pos=0.5] {$b'$} (s.corner 8);
    \draw[thick,dashed] (s.corner 4) to node[pos=0.4,below=3pt] {$b''$} (s.corner 6);
    \draw[thick,dashed] (s.corner 6) to node[pos=0.4] {$a'$} (s.corner 8);
    \draw[thick] (s.corner 2) to node[pos=0.3] {$r$} (s.corner 6);
  \end{tikzpicture} 
  \vspace{0.9mm}
\]

where if any of $a', a'', b', b''$ are edges of the polygon, then they are interpreted as the zero object. Using Lemma \ref{lem:rhoismult}, this means $\rho(m)=\rho(a')\rho(a'')+\rho(b')\rho(b'').$ \demo
\end{Remark}

\begin{Remark}
Remark \ref{rem:Ptolemysetup} provides an inductive method to compute $\rho(m)$. Namely, because $\rho(m)=\rho(a)+\rho(b) = \rho(a' \oplus a'') + \rho(b' \oplus b'') = \rho(a')\rho(a'') + \rho(b')\rho(b''),$ the problem reduces to calculating $\rho(a'),\rho(a''),\rho(b')$ and $\rho(b''),$ each of which is simpler to calculate than $\rho(m)$.

To see why, recall that the $R$-indecomposables are associated with diagonals of two different types: {\textit{dissecting}} and {\textit{clique}}. Draw the diagonal associated with $m$ in the same diagram as the dissecting diagonals of $R$. Then notice that, if we travel from left to right along the diagonal of $m$, two things can happen. We either stay inside a single cell or cross a dissecting diagonal.

If we stay inside a single cell and the cell is empty, then $Gm=0$, so $\rho(m)$ will equal $1$. If the cell is a clique, then the calculation of $\rho(m)$ is more involved and the details are found in the next section.

If $m$ is not inside a cell, then travelling along the diagonal of $m$ inevitably crosses a dissecting diagonal. We show an example. 

The dissecting diagonals are solid, $m$ is $\{3,7\}$, and moving along $m$ first encounters $r = \{2,4\}.$ We then utilise Remark \ref{rem:Ptolemysetup} and fill in the diagonals $a', a'', b', b''$ as in the following figure. \\
\[
  \begin{tikzpicture}[auto]
    \node[name=s, shape=regular polygon, regular polygon sides=8, minimum size=5.5cm, draw] {}; 
    \draw[thick] (s.corner 2) to (s.corner 4);
    \draw[thick] (s.corner 1) to (s.corner 5);
    \draw[thick] (s.corner 5) to (s.corner 8);
    \draw[thick,loosely dashed] (s.corner 3) to node[pos=0.3] {$m$} (s.corner 7);
    \draw[shift=(s.corner 1)] node[above] {$1$};
    \draw[shift=(s.corner 2)] node[above] {$2$};
    \draw[shift=(s.corner 3)] node[left] {$3$};
    \draw[shift=(s.corner 4)] node[left] {$4$};
    \draw[shift=(s.corner 5)] node[below] {$5$};
    \draw[shift=(s.corner 6)] node[below] {$6$};
    \draw[shift=(s.corner 7)] node[right] {$7$};
    \draw[shift=(s.corner 8)] node[right] {$8$};
    \draw[thick,dashed] (s.corner 2) to node[pos=0.64,above=3pt] {$a''$} (s.corner 3);
   \draw[thick,dashed] (s.corner 3) to node[pos=0.5,left=3pt] {$b''$} (s.corner 4);
   \draw[thick,dashed] (s.corner 4) to node[pos=0.2,below=3pt] {$a'$} (s.corner 7);
   \draw[thick,dashed] (s.corner 2) to node[pos=0.5,right=3pt] {$b'$} (s.corner 7);
  \end{tikzpicture} 
\] \\
We compute $\rho(m)$ as follows:
$$\rho(m) = \rho(a' \oplus a'') + \rho(b' \oplus b'') = \rho(a')\rho(a'') + \rho(b')\rho(b'')$$
$$\Rightarrow \rho(m) = \rho(a') + \rho(b')$$
where the implication on the second line is because $\rho(0) = 1$.

Now notice that the calculation for $\rho(a')$ and $\rho(b')$ will involve only two dissecting diagonals (instead of the original three for $m$). Indeed, it will always be the case that, as the inductive process unfolds, there are fewer and fewer dissecting diagonals involved. This is because (after fixing an orientation) $a''$ and $b''$ are always to the left of the first dissecting diagonal that $m$ crosses, and so will not cross any dissecting diagonals themselves. Furthermore, each of $a'$ and $b'$ will cross fewer dissecting diagonals than $m$ because they lie to the right of the first dissecting diagonal that $m$ crosses. Indeed, in the example above, whereas $m$ crosses the diagonals $\{2,4\},\{1,5\}$ and $\{5,8\}$, $a'$ and $b'$ both only cross $\{1,5\}$ and $\{5,8\}.$

We can then repeat the procedure on each of $a'$ and $b'$ (thereby reducing the number of diagonals again) and continue in this manner until we are finished. As there is a finite number of diagonals in a polygon, the algorithm will finish.

The matter is complicated when there are clique diagonals. When these are present, the same method as above is applied, but the computation of $\rho$ on clique diagonals requires more work, detailed in the forthcoming section. \demo
\end{Remark}

\section{Computing $\rho$ on Clique Diagonals}
\label{sec:CliqDiags}
Recall the setup introduced at the beginning of section \ref{sec:PtolDiags} and let the object $R$ of $\cC$ be given by a Ptolemy diagram $\fP$ in the polygon $P$. Throughout this section, let $\cR = \add(R).$ By the inductive procedure of Section \ref{sec:PtolDiags}, in order to get an algorithm for computing $\rho$, we need to determine $\rho$ on clique diagonals.

A clique $S$ in the Ptolemy diagram corresponding to $R$ may be given by the set of endpoints of its diagonals. This set is a subset of $\{1,2,\dots,n+2,n+3\}$. Consider the example below.

\begin{Example} \label{exa:Ptolemyexample}
Consider the following Ptolemy diagram $\fP$. \\
\[
  \begin{tikzpicture}[auto]
    \node[name=s, shape=regular polygon, regular polygon sides=8, minimum size=5cm, draw] {}; 
    \draw[thick] (s.corner 2) to (s.corner 4);
    \draw[thick] (s.corner 1) to (s.corner 5);
    \draw[thick] (s.corner 5) to (s.corner 8);
    \draw[thick] (s.corner 2) to (s.corner 5);
    \draw[thick] (s.corner 1) to (s.corner 4);
        \draw[shift=(s.corner 1)] node[above] {$1$};
    \draw[shift=(s.corner 2)] node[above] {$2$};
    \draw[shift=(s.corner 3)] node[left] {$3$};
    \draw[shift=(s.corner 4)] node[left] {$4$};
    \draw[shift=(s.corner 5)] node[below] {$5$};
    \draw[shift=(s.corner 6)] node[below] {$6$};
    \draw[shift=(s.corner 7)] node[right] {$7$};
    \draw[shift=(s.corner 8)] node[right] {$8$};
  \end{tikzpicture} 
\] \\
The corresponding set $D'$ is $\{\{2,4\},\{1,5\},\{5,8\}\}$; these are the dissecting diagonals of $\fP$. There is one clique, given by the subset $\{1,2,4,5\}$ of $\{1,2,\dots,7,8\}$. The diagonals of the clique are $S=\{\{1,4\},\{2,5\}\}$. Note $\{1,2\}$ and $\{4,5\}$ are not in the clique, as they are edges, and $\{2,4\}$ and $\{1,5\}$ are not in the clique, because they are dissecting diagonals. \demo
\end{Example}

\begin{Setup}
The diagonals of a clique $S$ correspond to a set of indecomposables in $\cC = \sC(A_n).$ Taking $\add$ of these gives $\cS$, which is itself a subcategory of $\cR$. Henceforth, forgiving our abuse of notation we may refer to $\cS$ as simply being a clique, and as explained before in Example \ref{exa:Ptolemyexample}, this clique is given by certain vertices of the $(n+3)$-gon. This permits us to say things like $\cS$ is a clique but $c \in \ind(\cS)$, i.e. $c$ is an indecomposable object corresponding to one of the diagonals in $S$. \demo
\end{Setup}

\begin{Remark}
Recall that if $\cA$ is a $\BC$-linear category, then $\Mod(\cA)$ denotes the abelian $\BC$-linear category of contravariant $\BC$-linear functors $\cA \rightarrow \Vect_\BC$. \demo
\end{Remark}

Recall that $\cR = \add(R)$ and $B=\End_\cC(R).$

\begin{Remark}
By \citep[Prop. 2.7(c)]{ARTAA1}, the functor
$$\Psi : \Mod(\cR) \rightarrow \Mod(B)$$ 
$$F \mapsto F(R)$$
is an equivalence of categories. \demo
\end{Remark}

\begin{Remark}
Consider the functors
$$\Gamma : \cC \rightarrow \Mod(\cR)$$
$$c \mapsto \Hom_\cC(-,\Sigma c)|_{_{\cR}}$$
and
$$G : \cC \rightarrow \Mod(B)$$
$$c \mapsto \Hom_\cC(R,\Sigma c).$$
Note that the following diagram

$$\xymatrix{
\cC \ar[r]^-{\Gamma} \ar[dr]_-{G} & \Mod(\cR) \ar[d]^-{\Psi} \\
& \Mod(B)
}
$$

commutes and due to the fact $\Psi$ is an equivalence, $\rho(c)=\chi(\Gr\,Gc)=\chi(\Gr\,\Gamma c)$ where $\Gr\,\Gamma c = \{M' \subseteq \Gamma c \hspace{1.5mm} | \hspace{1.5mm} M' \textrm{ is a subfunctor}\}$. \demo
\end{Remark}

Henceforth, we will use the notation $\cC(-,-)$ to denote $\Hom_\cC(-,-).$

\begin{Definition}
Define the {\textit{support}} of an additive functor $F : \sC \rightarrow \sD$, denoted $\textsf{Supp}\;F$, to be the set of objects of $\sC$ which under $F$ do not map to the zero object of $\sD$. \demo
\end{Definition}

\begin{Proposition} \label{pro:detbysup}
If $c \in \ind(\cC)$ then a subfunctor $M' \subseteq \Gamma c = \cC(-,\Sigma c)|_{_\cR}$ is determined by $\textsf{Supp}\;M'$.
\end{Proposition}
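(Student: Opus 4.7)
The plan is to show that the cluster-category combinatorics force $\Gamma c$ to take at most one-dimensional values on each indecomposable of $\cR$, so that every subfunctor $M'$ is ``all-or-nothing'' on indecomposables; additivity then extends the determination to all objects, and functoriality handles the morphisms for free.

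First I would appeal to Remark~\ref{rem:cluscatintro}: for indecomposable $r,c \in \cC$,
$$\dim_\BC \cC(r,\Sigma c) \;=\; \dim_\BC \Ext^1_\cC(r,c) \;\in\; \{0,1\},$$
the value being $1$ precisely when the diagonals of $r$ and $c$ cross. In particular, for every indecomposable summand $r$ of $R$ the space $\Gamma c(r) = \cC(r,\Sigma c)$ is at most one-dimensional, so the subspace $M'(r) \subseteq \Gamma c(r)$ must be either $0$ or all of $\Gamma c(r)$. Consequently $r \in \textsf{Supp}\;M'$ if and only if $M'(r) = \Gamma c(r)$, which is information read off entirely from the support.

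Next, let $r' \in \cR$ be arbitrary with indecomposable decomposition $r' = r_1 \oplus \cdots \oplus r_k$. Since $M'$ is a $\BC$-linear (hence additive) contravariant functor, it respects finite biproducts, so
$$M'(r') \;=\; M'(r_1) \oplus \cdots \oplus M'(r_k) \;\subseteq\; \Gamma c(r_1) \oplus \cdots \oplus \Gamma c(r_k) \;=\; \Gamma c(r').$$
Each $M'(r_i)$ is determined by the support by the previous step, so $M'(r')$ is determined by $\textsf{Supp}\;M'$ for every $r' \in \cR$. Moreover, for any morphism $f : r' \to r''$ in $\cR$, the linear map $M'(f)$ is simply the restriction of $\Gamma c(f)$ to $M'(r'')$, so it too is fixed once the subspaces $M'(r)$ have been pinned down. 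Two subfunctors of $\Gamma c$ with the same support therefore agree on every object and every morphism, which is exactly the claim.

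The main obstacle, such as it is, is essentially bookkeeping: checking that a subfunctor in $\Mod(\cR)$ is automatically additive (a direct consequence of $\BC$-linearity) so that the reduction to indecomposables is legitimate. The substantive input is the dimension bound $\dim_\BC \Ext^1_\cC(r,c) \leq 1$, which comes straight from the $A_n$-type cluster-category combinatorics and collapses the subfunctor question into a yes/no choice on each indecomposable summand of $R$.
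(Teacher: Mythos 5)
Your proof is correct and follows essentially the same route as the paper: both arguments hinge on the fact that $\Gamma c$ evaluates to either $0$ or $\BC$ on each indecomposable of $\cR$ (via the $A_n$ combinatorics), so a subfunctor is all-or-nothing on indecomposables and hence determined by its support. You merely spell out the additivity reduction and the inheritance of morphisms, which the paper treats as immediate.
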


\begin{proof}
A subfunctor $M' \subseteq \cC(-,\Sigma c)|_{_{\cR}}$ is defined objectwise on indecomposables and inherits morphisms from $\cC(-,\Sigma c)|_{_\cR}$. On indecomposables, this functor is either $0$ or $\BC$ and hence $M'$ is completely determined by its support.
\end{proof}

The Grassmannian $\Gr\,\Gamma c$ consists of different subfunctors $M'$ of $\Gamma c$. If $c \in \ind(\cC)$ then these subfunctors have different supports by Proposition \ref{pro:detbysup}, so each is an isolated point in the Grassmannian. This gives us the following proposition.

\begin{Proposition} \label{pro:rhoccountssubfuncts}
If $c \in \ind(\cC)$, then each $M' \subseteq \Gamma c$ is an isolated point in $\Gr\,\Gamma c$. Therefore $\rho(c)$ is equal to the number of subfunctors of $\Gamma c$.
\end{Proposition}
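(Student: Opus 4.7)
The plan is to combine Proposition \ref{pro:detbysup} with the one-dimensionality of the relevant $\Ext^1$-spaces in $\cC = \sC(A_n)$, as recorded in Remark \ref{rem:cluscatintro}, to show that $\Gr\,\Gamma c$ is literally a finite set of reduced points; the counting statement then falls out of the definition of the Euler characteristic.

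First I would unpack a subfunctor $M' \subseteq \Gamma c = \cC(-,\Sigma c)|_{_\cR}$ objectwise. For each indecomposable $r$ of $\cR$, the value $\Gamma c(r) = \cC(r,\Sigma c) \cong \Ext^1_{\cC}(r,c)$ is either $0$ or one-dimensional by Remark \ref{rem:cluscatintro}, so $M'(r)$ is forced to equal one of $0$ or $\Gamma c(r)$. Thus specifying $M'$ is a binary choice at each indecomposable in $\supp \Gamma c$ (constrained only by the functoriality requirement that $M'$ be preserved by the transition maps of $\Gamma c$), and Proposition \ref{pro:detbysup} already tells us that distinct supports give distinct subfunctors. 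In particular there are only finitely many $M'$ altogether.

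Next I would argue that any two distinct subfunctors are in fact isolated from one another in the Grassmannian topology. The key observation is that the $r$-component of a point of $\Gr\,\Gamma c$ takes values in the Grassmannian of the at-most-one-dimensional space $\Gamma c(r)$, which consists of just two reduced points; the locus where this component equals $0$ (respectively equals $\Gamma c(r)$) is both open and closed. Intersecting these clopen conditions over the finitely many relevant $r$ exhibits each $M'$ as a clopen singleton in $\Gr\,\Gamma c$. This is the only step that requires any care: one must confirm that no flat family of subfunctors can interpolate between $M'(r)=0$ and $M'(r)=\Gamma c(r)$, which is automatic because the ambient fibre is one-dimensional and the Grassmannian of a one-dimensional space is a pair of reduced points.

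Finally, since $\Gr\,\Gamma c$ is a finite disjoint union of isolated reduced points, its Euler characteristic with compact supports coincides with its cardinality. Combined with the earlier identification $\rho(c) = \chi(\Gr\,Gc) = \chi(\Gr\,\Gamma c)$ coming from the equivalence $\Psi$, this yields the desired conclusion that $\rho(c)$ equals the number of subfunctors of $\Gamma c$. I do not anticipate any serious obstacle here; the whole argument essentially rides on the one-dimensionality of $\Ext^1$ in the $A_n$ cluster category, which collapses the Grassmannian to a combinatorial object.
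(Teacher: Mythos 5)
Your argument is correct and follows essentially the same route as the paper: both rely on Proposition \ref{pro:detbysup} together with the one-dimensionality of the relevant $\Ext^1$-spaces to see that $\Gr\,\Gamma c$ is a finite set of isolated points, and then let the Euler characteristic count them. You supply a bit more justification for the isolation claim (observing that the Grassmannian of a one-dimensional space is a pair of reduced points, so each subfunctor sits in a clopen singleton), which the paper leaves implicit, but the key ideas and the overall structure are identical.
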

\begin{proof}
The Grassmannian is a finite collection of isolated points, therefore the Euler characteristic $\chi$ counts the number of points. Therefore $\rho$ counts the number of points and hence the number of subfunctors.
\end{proof}

In the rest of this section, $\cS$ is a clique and $c \in \ind(\cS).$ We will compute $\rho(c)$. By Proposition \ref{pro:rhoccountssubfuncts} we must count the number of subfunctors of $\Gamma c$. This is accomplished by Remarks \ref{rem:countremark1}, \ref{rem:countremark2}, \ref{rem:rectanglesub} and \ref{def:slicedef}.

\begin{Remark} \label{rem:countremark1}
Because $\Mod(\cR)$ has enough projectives, each subfunctor $M' \subseteq \Gamma c$ is the image of a morphism $P \rightarrow \Gamma c$ where $P$ is projective. We can assume $P$ is the direct sum of finitely many indecomposable projectives, each of the form $\cC(-,r)|_{_\cR} = \cR(-,r) = P_r$ for $r \in \ind(\cR).$ This means that $M'$ is the image of a morphism $\displaystyle\bigoplus_{i=1}^m P_{r_i} \rightarrow \Gamma c,$ i.e. $M'$ is the sum of images of morphisms of the form $P_r \rightarrow \Gamma c$.

Let $a : \cR(-,r) \rightarrow \cC(-,\Sigma c)|_{_\cR}$ be such a morphism. By Yoneda's Lemma, it corresponds to an element $\alpha \in \cC(r,\Sigma c)$. Note, $a$ is a natural transformation, and on $r'$ it evaluates to $\xymatrix{\cR(r',r) \ar[r]^-{a_{r'}} & \cC(r',\Sigma c)}$ which is the map given by composition (on the left) by $\alpha.$

Now, there are two possibilities for $r$. Either it is not in the clique $\cS$, which means the diagonals of $r$ and $c$ do not cross. This implies $\cC(r,\Sigma c)=0$, so $\alpha = 0,$ and hence $a=0$. The other possibility for $r$ is that it is in the clique $\cS$. In this case, $\Image(a)$ must be determined.

We have that $\cC(-,\Sigma c)|_{_\cR}$ is supported on $\cS$ because the clique diagonal corresponding to $c$ only crosses those diagonals in the Ptolemy diagram $\fP$ which are in the clique $\cS$. So since $\Image (a)$ is a subfunctor of $\cC(-,\Sigma c)|_{_\cR},$ it is determined by its restriction to $\ind(\cS)$. Let us investigate the functor $\Image(a)$ on $s \in \ind(\cS)$. By objectwise computation, 
$$(\Image(a))(s) = \Image(\{a_s : \cC(s,r) \rightarrow \cC(s,\Sigma c)\})$$
\begin{equation} \label{equ:Images}
\iff (\Image(a))(s) = \Big \{ \begin{matrix} \BC, \hspace{1.5 mm} \textrm{if} \hspace{1.5mm} \exists \hspace{1.5 mm} \sigma : s \rightarrow r \hspace{1.5mm} \textrm{s.t.} \hspace{1.5mm} \alpha\sigma \neq 0, \\ 0, \hspace{1.5 mm} \textrm{if no such} \hspace{1.5 mm} \sigma : s \rightarrow r \hspace{1.5 mm} \textrm{exists.} \hspace{3 mm} \end{matrix}
\end{equation} \demo
\end{Remark}

\begin{Remark} \label{rem:countremark2}
Consider again the indecomposable objects $c, r,$ and $s$ from Remark \ref{rem:countremark1}. Let $s$ be the indecomposable associated with the diagonal $\{s_0,s_1\}$, $r$ the indecomposable associated with the diagonal $\{r_0,r_1\}$, and $\Sigma c$ the indecomposable associated with $\{c_0-1,c_1-1\}$. The diagonals are in the $(n+3)$-gon $P.$ Consider the Auslander-Reiten quiver of $\cC = \sC(A_n).$  Illustrated below with solid lines emanating from $\{s_0,s_1\}$ is the rectangle in the Auslander-Reiten quiver spanned from $s$; this is the region to which $s$ has nonzero morphisms. A similar rectangle can be drawn from $r.$ There will be nonzero morphisms $s \rightarrow r \rightarrow \Sigma c$ with nonzero composition if and only if 
\begin{enumerate}
\item $\{r_0,r_1\}$ is in the rectangle spanned from $\{s_0,s_1\},$ and
\item $\{c_0-1,c_1-1\}$ is inside both the rectangle spanned from $\{s_0,s_1\}$ and the rectangle spanned from $\{r_0,r_1\}.$
\end{enumerate}
This is illustrated below.
$$ \centering{ \def\objectstyle{\scriptstyle}
\xymatrix @-3.8pc @!  {
 *{} \ar@{-}[rrr] & & & \{s_0,s_0-2\} \ar@{-}[dddddrrrrr] \ar@{-}[rrrrrr] & & & *{} & *{} &  & *{} \\
 & & & & & & & & \\
 & \{s_0,s_1\} \ar@{-}[uurr] \ar@{-}[dddddrrrrr] & & & & & & & \\
 & & & & \{r_0,r_1\} \ar@{~}[ddddrrrr] \ar@{~}[uuurrr] & & & & \\
 & & & & &  & \{c_0-1,c_1-1\} & & \\
 & & & & & & & & \{s_1-2,s_0-2\} \\
 & & & & & & & & \\
 *{} \ar@{-}[rrrrrr] & & & & & & \{s_1-2,s_1\} \ar@{-}[uurr] \ar@{-}[rrr] & & *{} & \\
} }
$$
In other words, this will happen if and only if
\begin{equation} \label{equ:vertexinequalities}
s_0 \leq r_0 \leq c_0-1 \leq s_1-2 \leq s_1 \leq r_1 \leq c_1-1 \leq s_0-2 
\end{equation}
where the sequence of inequalities means the vertices must occur in this order when moving anti-clockwise around the polygon. We formulate this in a proposition. \demo
\end{Remark}

\begin{Proposition} \label{pro:seqofinequals1}
Let $s \in \ind(\cS)$. There exist $\xymatrix@1{s \ar[r]^-{\sigma} & r \ar[r]^-{\alpha} & \Sigma c}$ with nonzero composition if and only if the endpoints of the corresponding diagonals satisfy the sequence of inequalities (\ref{equ:vertexinequalities}). \demo
\end{Proposition}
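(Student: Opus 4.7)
The plan is to read off the necessary and sufficient conditions for each morphism in the chain $s \to r \to \Sigma c$ directly from the description of Hom-spaces in $\sC(A_n)$ via the AR quiver recalled in Remark \ref{rem:cluscatintro}, and then combine them into the single cyclic sequence (\ref{equ:vertexinequalities}).

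First I would recall the standard combinatorial description: for indecomposables $x, y \in \cC$ corresponding to diagonals $\{x_0, x_1\}$ and $\{y_0, y_1\}$, the space $\cC(x, y)$ is at most one-dimensional, and is nonzero precisely when $y$ lies in the forward rectangle (hammock) of $x$ in the AR quiver. In diagonal coordinates this rectangle condition translates to the cyclic inequalities $x_0 \leq y_0 \leq x_1 - 2$ and $x_1 \leq y_1 \leq x_0 - 2$, read anti-clockwise around the $(n+3)$-gon. Applying this to $\sigma \in \cC(s, r)$ yields $s_0 \leq r_0 \leq s_1 - 2$ and $s_1 \leq r_1 \leq s_0 - 2$; applying it to $\alpha \in \cC(r, \Sigma c)$, where $\Sigma c$ corresponds to $\{c_0 - 1, c_1 - 1\}$, yields $r_0 \leq c_0 - 1 \leq r_1 - 2$ and $r_1 \leq c_1 - 1 \leq r_0 - 2$. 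A nonzero composition additionally forces $\cC(s, \Sigma c) \neq 0$, giving $s_0 \leq c_0 - 1 \leq s_1 - 2$ and $s_1 \leq c_1 - 1 \leq s_0 - 2$. Interleaving these three pairs of cyclic inequalities produces exactly (\ref{equ:vertexinequalities}), and conversely (\ref{equ:vertexinequalities}) implies each pair (for instance, $c_0 - 1 \leq s_1 - 2$ and $s_1 \leq r_1$ combine to give $c_0 - 1 \leq r_1 - 2$).

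This establishes the direction "nonzero composition implies the inequalities". For the converse, one needs not merely that the three Hom-spaces are nonzero, but that the composition $\alpha \circ \sigma$ is itself a nonzero element of $\cC(s, \Sigma c)$. Since all three Hom-spaces involved are one-dimensional, the composition pairing
\[
\cC(r, \Sigma c) \otimes \cC(s, r) \longrightarrow \cC(s, \Sigma c)
\]
is either identically zero or an isomorphism of one-dimensional vector spaces. The main obstacle is ruling out the zero case. For this I would invoke the description of $\sC(A_n)$ as the mesh category of $\BZ A_n$ modulo the glide reflection: the composition of nonzero morphisms along a rectangle-monotone triple $(s, r, \Sigma c)$ in the AR quiver does not vanish modulo mesh relations, since those relations only identify sums of paths through the two middle terms of a single mesh, and this is not the configuration when $s, r, \Sigma c$ sit inside a common rectangle as dictated by (\ref{equ:vertexinequalities}). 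Hence $\alpha \circ \sigma \neq 0$ whenever the cyclic sequence (\ref{equ:vertexinequalities}) holds, completing the equivalence.
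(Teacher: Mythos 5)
Your proof is correct and takes essentially the same approach as the paper, which establishes this result by the picture in Remark \ref{rem:countremark2} displaying the forward rectangles (hammocks) of $s$ and $r$ and reading off that $\Sigma c$ must lie in both. You make the rectangle conditions explicit as cyclic inequalities and give a slightly more careful justification (via the mesh category of $\BZ A_n$) of why the composition pairing does not vanish, a point the paper treats as implicit in the picture.
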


\begin{Remark} \label{rem:rectanglesub}
The inequalities in the proposition imply that $c_0+1 \leq r_1 \leq c_1-1$ and $c_1+1 \leq r_0 \leq c_0-1$, hence the diagonals representing $r$ and $c$ cross. The inequalities also imply $c_0+1 \leq s_1 \leq r_1$ and $c_1+1 \leq s_0 \leq r_0$.

We now apply this to our clique $\cS$ and object $c \in \ind(\cS)$.

Consider the rectangle of grid points $X(c)$ below (where the edges have been filled in for illustrative purposes).
$$ \centering{ \def\objectstyle{\scriptstyle}
\xymatrix @-1.9pc @!  {
*{} & *{} & *{} & *{} & *{} & *{} & *{} & *{} & *{} & *{}  \\
*{} & c_0-1 \ar[u]  & *{} & *{} \ar@{-}[rrrr] & *{} & *{} & *{} & *{} \ar@{-}[r]^-{X(c)} & *{} & *{}  \\
*{} & r_0 \ar@{-}[u] & *{} & *{} \ar@{--}[rr] & *{} & *{} \ar@{--}[r]^-{X(c)(r)} & *{} & *{} & *{} & *{}  \\
*{} & *{} & *{} & *{} & *{} & *{} & *{} & *{} & *{} & *{}  \\
*{} & c_1+1 \ar@{-}[uu] & *{} & *{} \ar@{-}[uuu] \ar@{-}[rrrrr] & *{} & *{} & *{} \ar@{--}[uu] & *{} & *{} \ar@{-}[uuu] & *{}  \\
*{} & *{} & *{} & *{} & *{} & *{} & *{} & *{} & *{} & *{}  \\
*{} \ar@{-}[rrr] & *{} & *{} & c_0+1 \ar@{-}[rrr] & *{} & *{} & r_1 \ar@{-}[rr] & *{} & c_1-1 \ar[r] & *{} \\
*{} & *{} \ar@{-}[uuu] & *{} & *{} & *{} & *{} & *{} & *{} & *{} & *{} 
}}
$$
On axes, put clique vertices in the indicated intervals. The corresponding set of grid points will be denoted $X(c)$. It corresponds to the diagonals in the clique $\cS$ which cross $\{c_0,c_1\},$ hence to the support of $\Gamma c$ (inside $\ind(\cR)$). The subrectangle of grid points $X(c)(r)$ shows the support of $\Image(a)$, by a combination of the formula (\ref{equ:Images}) and Proposition \ref{pro:seqofinequals1}. \demo
\end{Remark}

\begin{Remark} \label{def:slicedef}
We have now determined the subfunctors $\Image(a) \subseteq \Gamma c.$ Recall Remark \ref{rem:countremark1} which says that the sums of these give all subfunctors of $\Gamma c$. So let us determine these sums.

For each $r \in \ind(\cS)$ we have a subfunctor $E_r = \Image(a) \subseteq \cC(-,\Sigma c)|_{_\cR}$ where $a$ comes from a nonzero map $\alpha : r \rightarrow \Sigma c$. The support of $E_r$ is given by the subrectangle $X(c)(r)$ as indicated above in Remark \ref{rem:rectanglesub}. Now, consider a sum $E_r+E_{r'}$ inside $\cC(-,\Sigma c)|_{_\cR}$ for $r, r' \in \ind(\cS).$ Evaluating at $s \in \textsf{Supp}\;\cC(-,\Sigma c)|_{_\cR}$ gives $E_r(s)+E_{r'}(s) \subseteq \cC(s,\Sigma c)|_{_\cR} = \BC$. This implies that $E_r(s) + E_{r'}(s)$ either equals zero (if both summands are zero) or $\BC$ (if at least one of them is $\BC$). The upshot is that $\textsf{Supp}(E_r+E_{r'})$ is a solid ``staircase'' of grid points (illustrated below).
$$ \centering{ \def\objectstyle{\scriptstyle}
\xymatrix @-1.2pc @!  {
*{} & *{} & *{} & *{} & *{} & *{} & *{} & *{} & *{} & *{}  \\
*{} & c_0-1 \ar[u]  & *{} \ar@{--}[r] \ar@{.}[r]  \ar@{.}[dddd] & *{} \ar@{-}[rrrrr] & *{} & *{} & *{} & *{} & *{} & *{}  \\
*{} & r_0 \ar@{-}[u] & *{} & *{} \ar@{--}[r] & *{} & *{} & *{} & *{} & *{} & *{} \\
*{} & r_0' \ar@{-}[u] & *{} & *{} \ar@{--}[rrr] & *{} & *{} & *{} & *{} & *{} & *{}  \\
*{} & c_1+1 \ar@{-}[u] & *{} & *{} \ar@{-}[uuu] \ar@{-}[rrrrr] & *{} \ar@{--}[uu] \ar@{--}[d] & *{} & *{} \ar@{--}[u] \ar@{--}[d] & *{} & *{} \ar@{-}[uuu] \ar@{.}[d] & *{}  \\
*{} & *{} & *{} \ar@{.}[rrrrrr] & *{} & *{} \ar@{--}[rrrr] & *{} & *{} & *{} & *{} & *{}  \\
*{} \ar@{-}[rrr] & *{} & *{} & c_0+1 \ar@{-}[r] & r_1 \ar@{-}[rr] & *{} & r_1' \ar@{-}[rr] & *{} & c_1-1 \ar[r] & *{} \\
*{} & *{} \ar@{-}[uuu] & *{} & *{} & *{} & *{} & *{} & *{} & *{} & *{}  \\
} }
$$
We want to count the number of such ``staircases''. Instead of counting them directly, it turns out to be easier to count their upper edges. Accordingly, we extend the rectangle of grid points $X(c)$ one vertex to the left and one vertex down, yielding the rectangle $X^{+}(c)$. The upper edge of a staircase should be drawn in $X^{+}(c)$; for instance, the ``empty staircase'' corresponds to an upper edge going straight down, then straight right. Where there are $a$ grid points along the horizontal axis of $X(c)$ and $b$ grid points along the vertical axis of $X(c)$, the corresponding rectangle $X^{+}(c)$ {\textit{in which to embed the upper edge of a staircase}} has side lengths $a$ and $b$. \demo
\end{Remark}

\begin{Theorem}
A subfunctor $M' \subseteq \cC(-,\Sigma c)|_{_\cR}$ has support $\textsf{Supp}\;M'$ equal to a solid staircase of grid points inside the rectangle $X(c)$, which corresponds bijectively to an upper edge of a staircase inside the rectangle $X^{+}(c)$, giving $\textsf{Supp}\;\cC(-,\Sigma c)|_{_\cR}.$ \demo
\end{Theorem}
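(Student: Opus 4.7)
The plan is to assemble the observations of Remarks \ref{rem:countremark1}, \ref{rem:rectanglesub} and \ref{def:slicedef} together with Propositions \ref{pro:detbysup} and \ref{pro:seqofinequals1}. By Proposition \ref{pro:detbysup}, $M'$ is determined by $\textsf{Supp}\;M'$, so it suffices to describe exactly which subsets of $X(c)$ arise as $\textsf{Supp}\;M'$ and then to set up the bijection with upper edges in $X^{+}(c)$. The identification $X(c) = \textsf{Supp}\;\cC(-,\Sigma c)|_{_\cR}\cap\ind(\cR)$ is already observed in Remark \ref{rem:rectanglesub}, which justifies the last clause of the theorem.

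First I would reduce to understanding sums of a distinguished family of ``building block'' subfunctors. By Remark \ref{rem:countremark1}, every $M'\subseteq\Gamma c$ is a sum $\sum_i E_{r_i}$ of images of morphisms $\cR(-,r_i)\to\Gamma c$, and $E_r=0$ unless $r\in\ind(\cS)$. For $r\in\ind(\cS)$, Proposition \ref{pro:seqofinequals1} combined with Remark \ref{rem:rectanglesub} identifies $\textsf{Supp}\;E_r$ with the subrectangle $X(c)(r)$, consisting of those $s\in X(c)$ whose coordinates satisfy $c_1+1\leq s_0\leq r_0$ and $c_0+1\leq s_1\leq r_1$ in the cyclic order inherited from the $(n+3)$-gon.

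Next I would observe that the support of a sum is the union of the supports. Since $\cC(s,\Sigma c)$ is one-dimensional for each $s\in X(c)$, the value $M'(s)$ is either $0$ or $\BC$; consequently $\textsf{Supp}\;M'=\bigcup_i X(c)(r_i)$. View $X(c)$ as the product of two totally ordered intervals under the partial order $(s_0,s_1)\preceq(r_0,r_1)\iff s_0\leq r_0\text{ and }s_1\leq r_1$ (cyclically). Each $X(c)(r)$ is then the principal down-set generated by $r$, so $\textsf{Supp}\;M'$ is automatically an order ideal. Conversely, every order ideal $T\subseteq X(c)$ is realised: take its maximal elements $r$, choose for each such $r$ a nonzero $\alpha_r:r\to\Sigma c$ (possible by one-dimensionality), and sum the resulting $E_r$'s to obtain a subfunctor with support $T$. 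Because $X(c)$ is a product of two totally ordered intervals, its order ideals are precisely the solid staircases depicted in Remark \ref{def:slicedef}.

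Finally I would invoke the standard bijection between order ideals in a rectangular poset of shape $a\times b$ and monotone lattice paths in the augmented $(a+1)\times(b+1)$ grid $X^{+}(c)$: the boundary of a staircase in $X(c)$ traces out the ``upper edge'' from the top-left to the bottom-right corner of $X^{+}(c)$, and every monotone path determines an order ideal in this way. The main obstacle is the middle step, namely confirming simultaneously that the support of a sum of $E_r$'s equals the union of the supports (using one-dimensionality of $\cC(s,\Sigma c)$), and that every order ideal is genuinely realised in this way, so that the correspondence with staircases is bijective rather than merely surjective from the side of building blocks.
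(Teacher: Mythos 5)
Your proposal assembles the same ingredients the paper uses — Remarks \ref{rem:countremark1}, \ref{rem:rectanglesub}, \ref{def:slicedef} and Propositions \ref{pro:detbysup}, \ref{pro:seqofinequals1} — into a proof, and the paper itself presents the theorem as a summary of exactly this material (it closes with $\diamond$ rather than giving a separate proof). You make two points slightly more explicit than the paper does: that the support of a sum of the $E_r$'s is the union of their supports (forced by one-dimensionality of $\cC(s,\Sigma c)$), and that the correspondence with staircases is genuinely bijective because every order ideal in the rectangular poset is realised by summing the $E_r$'s over its maximal elements. The paper's Remark \ref{def:slicedef} shows only that sums of $E_r$'s have staircase support; your added observation that every staircase arises this way is needed for the ``corresponds bijectively'' clause of the theorem and is a worthwhile tightening of the argument, though it is clearly what the paper intends.
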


\begin{Proposition}
There is a bijection between upper edges of staircases in $X^{+}(c)$ and solid staircases of grid points inside $X(c)$, illustrated above, representing the support of a given $\Image(a)$. \demo
\end{Proposition}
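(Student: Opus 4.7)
The plan is to exhibit explicit mutually inverse maps between the set of solid staircases of grid points inside $X(c)$ and the set of upper edges in $X^+(c)$. The starting point is to fix conventions: I would orient $X(c)$ so that its horizontal side carries $a$ grid points and its vertical side carries $b$ grid points, and view $X^+(c)$ as the $(a+1) \times (b+1)$ vertex grid obtained by adjoining one extra column on the left and one extra row at the bottom, so that a monotone right-and-down lattice path from the top-left corner to the bottom-right corner of $X^+(c)$ uses exactly $a$ unit right-steps and $b$ unit down-steps.

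With these conventions in place, I would define the forward map as follows. Given a solid staircase $\sigma \subseteq X(c)$ in the sense of Remark \ref{def:slicedef}, let $\gamma(\sigma)$ be the unique monotone right-and-down lattice path in $X^+(c)$, from the top-left corner to the bottom-right corner, which traces the boundary between $\sigma$ (lying below and to the right of the path) and $X(c) \setminus \sigma$ (lying above and to the left). The reason for enlarging $X(c)$ to $X^+(c)$ is precisely to accommodate the extremal cases $\sigma = \emptyset$, where the path descends the added leftmost column and then traverses the added bottom row, and $\sigma = X(c)$, where the path first traverses the top row and then descends the rightmost column of $X^+(c)$. For the inverse, given a monotone right-and-down lattice path $\gamma$ in $X^+(c)$ from the top-left to the bottom-right corner, I would define $\sigma(\gamma) \subseteq X(c)$ to be the set of grid points of $X(c)$ lying strictly below and strictly to the right of $\gamma$.

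The key verifications are: first, that $\gamma(\sigma)$ is a well-defined monotone path, which follows because $\sigma$ is closed under moving up or left within $X(c)$ by the staircase description in Remark \ref{def:slicedef}; second, that $\sigma(\gamma)$ is in turn a solid staircase, which follows from the monotonicity of $\gamma$; and third, that the compositions $\sigma \circ \gamma$ and $\gamma \circ \sigma$ are identities, an immediate consequence of each solid staircase being determined by its upper boundary and vice versa.

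The main obstacle is essentially bookkeeping of conventions rather than substantive combinatorial content: once $X(c)$ and $X^+(c)$ are unambiguously oriented and the notion of ``upper edge'' is precisely fixed (especially in the extremal cases), the proposition reduces to the classical correspondence between Young-diagram-like shapes fitting in an $a \times b$ box and monotone lattice paths of combined length $a+b$ in the adjacent $(a+1) \times (b+1)$ vertex grid, which is standard. Accordingly, I would keep the write-up brief, emphasizing the definition of the two maps and deferring the routine verification of the staircase closure property and the inversion to a remark.
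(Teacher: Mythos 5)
Your argument is correct, and it matches the approach the paper implicitly intends: the paper states this Proposition with no written proof (it ends with the $\diamond$ marker and relies on the illustration in Remark \ref{def:slicedef}), and the content is exactly the classical bijection between Young-diagram-type shapes in an $a\times b$ box and monotone lattice paths, which is what you spell out. Your write-up simply makes the implicit correspondence explicit, including the extremal cases, so there is no genuine difference of route.
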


\begin{Corollary} \label{cor:submodsequalsstaircases}
The number of submodules of $\cC(-,\Sigma c)|_{_\cR}$ is equal to the number of upper edges of staircases in the rectangle $X^{+}(c)$. \demo
\end{Corollary}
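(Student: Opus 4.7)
The plan is to chain together the bijections established by the results immediately preceding the Corollary. First, by Proposition \ref{pro:detbysup}, a subfunctor $M' \subseteq \Gamma c = \cC(-,\Sigma c)|_{_\cR}$ is completely determined by its support $\textsf{Supp}\;M'$: since $\Gamma c$ takes only the value $0$ or $\BC$ on each indecomposable of $\cR$, the assignment $M' \mapsto \textsf{Supp}\;M'$ is an injection from the set of subfunctors into the set of subsets of $\ind(\cR)$.

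Next, the preceding Theorem pins down the image of this assignment: a subset of $\ind(\cR)$ arises as $\textsf{Supp}\;M'$ for some subfunctor $M' \subseteq \cC(-,\Sigma c)|_{_\cR}$ if and only if it is a solid staircase of grid points inside the rectangle $X(c)$. Combining these two facts yields a bijection between the submodules of $\cC(-,\Sigma c)|_{_\cR}$ and the solid staircases of grid points in $X(c)$.

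Finally, the preceding Proposition supplies a bijection between solid staircases of grid points inside $X(c)$ and upper edges of staircases inside the augmented rectangle $X^{+}(c)$. Composing the two bijections produces a bijection between the set of submodules of $\cC(-,\Sigma c)|_{_\cR}$ and the set of upper edges of staircases in $X^{+}(c)$, so the two sets have the same cardinality, which is exactly the statement of the Corollary.

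The main obstacle lies not in this short argument but in the results it invokes, in particular the Theorem's identification of supports with solid staircases, which rests on the analysis in Remarks \ref{rem:countremark1}, \ref{rem:countremark2}, \ref{rem:rectanglesub} and \ref{def:slicedef} (the crucial point being that the sum $E_r+E_{r'}$ of images of Yoneda morphisms has support equal to the union of the two subrectangles $X(c)(r)$ and $X(c)(r')$, producing a staircase rather than an arbitrary subset). Once that material is in hand, the Corollary is a formal consequence via the composition of bijections.
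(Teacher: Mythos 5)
Your argument is correct and is exactly the chain of results the paper intends: the preceding Proposition, Theorem, and Proposition~\ref{pro:detbysup} are composed to give the stated bijection, which is why the paper leaves the Corollary unproved (marked with a diamond). Your explicit spelling-out of the injection via supports, the identification of the image as solid staircases, and the final bijection to upper edges in $X^{+}(c)$ matches the paper's implicit reasoning.
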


\begin{Lemma}
The number of upper edges of staircases in a rectangle with side lengths $a$ and $b$ is the multimomial coefficient 
$$\displaystyle{{a+b}\choose{a,b}} = \displaystyle\frac{(a+b)!}{a!b!}.$$
\end{Lemma}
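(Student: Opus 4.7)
The plan is to identify an upper edge of a staircase in the rectangle $X^{+}(c)$ with a monotone lattice path. Fix the two corners of the rectangle that any such edge must connect (by the description in Remark \ref{def:slicedef}, the upper edge runs from the top-left corner, turning only to the right or downward, and ends at the bottom-right corner). Each edge is then uniquely encoded by a sequence of unit steps, each of which is either a ``right'' step $R$ or a ``down'' step $D$. The rectangle has side lengths $a$ and $b$, so any such sequence must contain exactly $a$ right steps and $b$ down steps, for a total length $a+b$.

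The next step is to show that this encoding is a bijection. On one side, every upper edge of a staircase reads off as such a word in $\{R,D\}$. On the other side, every word with exactly $a$ $R$'s and $b$ $D$'s traces out a monotone path which serves as the upper edge of a unique solid staircase sitting inside $X(c)$; this is immediate from the very definition of the ``staircase'' support sets discussed in Remark \ref{def:slicedef}. Granting the bijection, counting upper edges reduces to counting length-$(a+b)$ words with $a$ letters $R$ and $b$ letters $D$.

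Finally, I would invoke the standard fact that the number of such words is obtained by choosing which $a$ positions out of $a+b$ are occupied by $R$, giving
\[
\binom{a+b}{a} \;=\; \frac{(a+b)!}{a!\,b!},
\]
which is the claimed multinomial coefficient $\binom{a+b}{a,b}$. The only step with any real content is verifying the bijection between upper edges and words; everything else is bookkeeping, and even this verification is essentially built into how staircases and their upper edges have been set up in Remark \ref{def:slicedef}, so I do not anticipate a serious obstacle.
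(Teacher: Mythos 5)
Your argument is essentially identical to the paper's: both encode each upper edge as a monotone lattice path, read it off as a word in two step-letters, and count those words by a binomial (multinomial) coefficient. The only cosmetic difference is that you assign $a$ right-steps and $b$ down-steps whereas the paper uses $a$ copies of the down-letter and $b$ of the right-letter; since $\binom{a+b}{a,b}$ is symmetric in $a$ and $b$, this does not affect the result.
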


\begin{proof}
An upper edge of a staircase is given by a unique permutation of the word $rrr\dots rddd\dots d$ which has $a$ copies of the letter $d$ (``$d$'' for {\textit{down}}) and $b$ copies of the letter $r$ (``$r$'' for {\textit{right}}). This is given by the multinomial coefficient 
$$\displaystyle{{a+b}\choose{a,b}} = \displaystyle\frac{(a+b)!}{a!b!}.$$
\end{proof}
 
\begin{Corollary}
If $c$ is an object corresponding to a clique diagonal, then
$$\rho(c) = \#\textrm{ subfunctors of } \cC(-,\Sigma c)|_{_\cR} = \#\textrm{ upper edges of staircases of } X^{+}(c).$$
\end{Corollary}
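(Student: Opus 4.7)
The plan is to simply chain together the three preceding results, since the work has all been done. First I would invoke Proposition \ref{pro:rhoccountssubfuncts}: because $c \in \ind(\cS) \subseteq \ind(\cR) \subseteq \ind(\cC)$, every subfunctor $M' \subseteq \Gamma c = \cC(-,\Sigma c)|_{_\cR}$ is determined by its support (Proposition \ref{pro:detbysup}), hence is an isolated point of the module Grassmannian $\Gr\,\Gamma c$. Consequently
\[
\rho(c) \;=\; \chi(\Gr\,Gc) \;=\; \chi(\Gr\,\Gamma c) \;=\; \#\{\text{subfunctors of } \cC(-,\Sigma c)|_{_\cR}\},
\]
where the second equality uses the commutative triangle involving the equivalence $\Psi$ established earlier in Section \ref{sec:CliqDiags}.

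Next I would apply Corollary \ref{cor:submodsequalsstaircases}, which says precisely that this number of subfunctors equals the number of upper edges of staircases in the rectangle $X^{+}(c)$. Composing the two equalities yields the claim.

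The only point that might require a brief word is the implicit identification of ``subfunctors'' with ``submodules'' in Corollary \ref{cor:submodsequalsstaircases}; this is immediate from the equivalence $\Psi : \Mod(\cR) \xrightarrow{\sim} \Mod(B)$ sending $F \mapsto F(R)$, under which $\Gamma c$ corresponds to $Gc$ and subfunctors correspond bijectively to submodules. There is no genuine obstacle, since the substantive content lies in Remarks \ref{rem:countremark1}--\ref{def:slicedef} and in Proposition \ref{pro:seqofinequals1}, which together classify the generating subfunctors $E_r$ and identify their sums with the solid staircases in $X(c)$; the present corollary is merely the statement that packages those results.
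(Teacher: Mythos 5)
Your proof is correct and matches the paper's own argument, which is exactly the two-step chain of Proposition \ref{pro:rhoccountssubfuncts} (isolated points, so $\rho(c)$ counts subfunctors) followed by Corollary \ref{cor:submodsequalsstaircases} (subfunctors correspond to upper edges of staircases in $X^{+}(c)$). The extra remarks about $c$ being indecomposable in $\cC$ and the subfunctor/submodule identification via $\Psi$ are harmless elaborations of what the paper leaves implicit.
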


\begin{proof}
Proposition \ref{pro:rhoccountssubfuncts} together with the Corollary \ref{cor:submodsequalsstaircases} gives the result.
\end{proof}

Given a clique diagonal $c$, to find the corresponding rectangle $X^{+}(c)$, we may start by drawing the clique, highlighted in an example below.

\begin{Example} \label{Exa:cliquerectangle}
Let us suppose we have a clique inside an octagon and are interested in $c=\{2,7\}$.
\[
  \begin{tikzpicture}[auto]
    \node[name=s, shape=regular polygon, regular polygon sides=8, minimum size=6cm, draw] {}; 
    \draw[thick] (s.corner 1) to (s.corner 3);
    \draw[thick] (s.corner 1) to (s.corner 4);
    \draw[thick] (s.corner 1) to (s.corner 5);
    \draw[thick] (s.corner 1) to (s.corner 6);
    \draw[thick] (s.corner 1) to (s.corner 7);
    \draw[thick] (s.corner 1) to (s.corner 8);
    \draw[thick] (s.corner 2) to (s.corner 4);
    \draw[thick] (s.corner 2) to (s.corner 5);
    \draw[thick] (s.corner 2) to (s.corner 6);
    \draw[thick] (s.corner 2) to (s.corner 7);
    \draw[thick] (s.corner 2) to (s.corner 8);
    \draw[thick] (s.corner 3) to (s.corner 5);
    \draw[thick] (s.corner 3) to (s.corner 6);
    \draw[thick] (s.corner 3) to (s.corner 7);
    \draw[thick] (s.corner 3) to (s.corner 8);
    \draw[thick] (s.corner 4) to (s.corner 6);
    \draw[thick] (s.corner 4) to (s.corner 7);
    \draw[thick] (s.corner 4) to (s.corner 8);
    \draw[thick] (s.corner 5) to (s.corner 7);
    \draw[thick] (s.corner 5) to (s.corner 8);
    \draw[thick] (s.corner 6) to (s.corner 8);
    \draw[shift=(s.corner 1)] node[above] {$2$};
    \draw[shift=(s.corner 2)] node[above] {$3$};
    \draw[shift=(s.corner 3)] node[left] {$4$};
    \draw[shift=(s.corner 4)] node[left] {$5$};
    \draw[shift=(s.corner 5)] node[below] {$6$};
    \draw[shift=(s.corner 6)] node[below] {$7$};
    \draw[shift=(s.corner 7)] node[right] {$8$};
    \draw[shift=(s.corner 8)] node[right] {$1$};
  \end{tikzpicture} 
\]
The rectangle of grid points $X$ associated with the diagonal $\{2,7\}$ is the following.
$$ \centering{ \def\objectstyle{\scriptstyle}
\xymatrix @-1.0pc @!  {
*{} & *{} & *{} & *{} & *{} & *{} & *{} & *{} & *{} & *{} \\
*{} & *{} & *{} & *{} & *{} & *{} & *{} & *{} & *{} & *{} \\
*{} & 6 \ar[uu]  & *{} & \bullet & \bullet & *{} & *{} & *{} & *{} & *{} \\
*{} & 5 \ar@{-}[u] & *{} & \bullet &  \bullet &  *{} & *{} & *{} & *{} & *{} \\
*{} & 4 \ar@{-}[u]& *{} & \bullet &  \bullet &  *{} & *{} & *{} & *{} & *{} \\
*{} & 3 \ar@{-}[u] & *{} & \bullet  & \bullet  & *{} & *{}  & *{} & *{}  & *{} \\
*{} & *{} & *{} & *{} & *{} & *{} & *{} & *{} & *{} & *{} \\
*{} \ar@{-}[rrr] & *{} & *{} & 8 \ar@{-}[r] & 1 \ar[rrrrr] & *{} & *{} & *{} & *{} & *{} \\
*{} & *{} \ar@{-}[uuu] & *{} & *{} & *{} & *{} & *{} & *{} & *{} & *{} \\
} }
$$
To count upper edges of staircases in the rectangle $X^{+}(\{2,7\})$, we extend one vertex in each direction, as follows.
$$ \centering{ \def\objectstyle{\scriptstyle}
\xymatrix @-1.0pc @!  {
*{} & *{} & *{} & *{} & *{} & *{} & *{} & *{} & *{} & *{} \\
*{} & *{} & *{} & *{} & *{} & *{} & *{} & *{} & *{} & *{} \\
*{} & 6 \ar[uu]  & \circ \ar@{-}[rr] \ar@{-}[dddd] & \bullet &  \bullet & *{} & *{} & *{} & *{} & *{} \\
*{} & 5 \ar@{-}[u] & \circ   & \bullet &  \bullet &  *{} & *{} & *{} & *{} & *{} \\
*{} & 4 \ar@{-}[u]& \circ & \bullet &  \bullet &  *{} & *{} & *{} & *{} & *{} \\
*{} & 3 \ar@{-}[u] & \circ & \bullet  & \bullet  & *{} & *{}  & *{} & *{}  & *{} \\
*{} & *{} & \circ \ar@{-}[rr] & \circ & \circ \ar@{-}[uuuu] & *{} & *{} & *{} & *{} & *{} \\
*{} \ar@{-}[rrr] & *{} & *{} & 8 \ar@{-}[r] & 1 \ar[rrrrr] & *{} & *{} & *{} & *{} & *{} \\
*{} & *{} \ar@{-}[uuu] & *{} & *{} & *{} & *{} & *{} & *{} & *{} & *{}
} }
$$
The number of upper edges of staircases is $\displaystyle\frac{(4+2)!}{4!2!} = 15$. We therefore conclude that $\rho(\{2,7\}) = 15$. \demo
\end{Example}
\begin{Remark}
In general, $\rho$ of a clique diagonal is $\displaystyle\frac{(a+b)!}{a!b!}$ where $a$ and $b$ are the numbers of clique vertices on either side of the clique diagonal.

In an $n$-gon, with vertices numbered $1,\dots,n$, and diagonals ordered $\{a,b\}$ with $a<b,$ these side lengths are always given by $b-a-1$ and $n-b+a-1$. That is, $\rho$ of any diagonal in a clique of size $n$ is given by
$$\displaystyle\frac{(n-2)!}{(b-a-1)!(n-b+a-1)!} = \displaystyle{{n-2}\choose{b-a-1}},$$
a binomial coefficient. \demo 
\end{Remark}
We conclude by calculating $\rho$ of diagonals in an example of a Ptolemy diagram.
\begin{Example}
Consider the following Ptolemy diagram.
\[
  \begin{tikzpicture}[auto]
    \node[name=s, shape=regular polygon, regular polygon sides=8, minimum size=5.5cm, draw] {}; 
    \draw[thick] (s.corner 1) to (s.corner 5);
    \draw[thick] (s.corner 1) to (s.corner 6);
    \draw[thick] (s.corner 1) to (s.corner 7);
    \draw[thick] (s.corner 1) to (s.corner 8);
    \draw[thick] (s.corner 3) to (s.corner 5);
    \draw[thick] (s.corner 5) to (s.corner 7);
    \draw[thick] (s.corner 5) to (s.corner 8);
    \draw[thick] (s.corner 6) to (s.corner 8);
    \draw[shift=(s.corner 1)] node[above] {$2$};
    \draw[shift=(s.corner 2)] node[above] {$3$};
    \draw[shift=(s.corner 3)] node[left] {$4$};
    \draw[shift=(s.corner 4)] node[left] {$5$};
    \draw[shift=(s.corner 5)] node[below] {$6$};
    \draw[shift=(s.corner 6)] node[below] {$7$};
    \draw[shift=(s.corner 7)] node[right] {$8$};
    \draw[shift=(s.corner 8)] node[right] {$1$};
  \end{tikzpicture} 
\] \\
The diagonal $\{5,8\}$ crosses the dissecting diagonal $\{4,6\}$ so Remark \ref{rem:Ptolemysetup} gives
$$\rho(\{5,8\}) = \rho(\{4,5\})\rho(\{6,8\})+\rho(\{5,6\})\rho(\{4,8\}) = 1 \times \rho(\{6,8\}) + 1 \times \rho(\{4,8\}).$$
In turn, $\{4,8\}$ crosses the dissecting diagonal $\{2,6\}$ so Remark \ref{rem:Ptolemysetup} gives
$$\rho(\{4,8\})=\rho(\{4,6\})\rho(\{2,8\})+\rho(\{2,4\})\rho(\{6,8\})=1 \times \rho(\{2,8\}) + 1 \times \rho(\{6,8\}).$$
Note that $\{4,6\}$ and $\{2,4\}$ cross none of the diagonals in the Ptolemy diagram, so $G$ of these diagonals is $0$ and $\rho$ of them is $1.$ Substituting this into the previous equation gives
$$\rho(\{5,8\}) = \rho(\{2,8\}) + 2 \times \rho(\{6,8\}).$$
Finally, $\{2,8\}$ and $\{6,8\}$ are diagonals in the clique defined by the vertices $\{6,7,8,1,2\}$, and each has one of these vertices on one side, and two on the other, so $\rho(\{2,8\})=\rho(\{6,8\})=\displaystyle\frac{(2+1)!}{2!1!} = 3,$
so $\rho(\{5,8\}) = 3 + 2 \times 3 = 9.$
We can compute $\rho$ values for the remaining diagonals and from this Ptolemy diagram the following values are thus produced, as shown on the Auslander-Reiten quiver of $C(A_5)$.
\[ \def\objectstyle{\scriptstyle}
  \xymatrix @-0.6pc @! {
    \cdots & & 5 \ar@{.}[dd] & & 3 & & 3 & & 3 & & 4 \ar@{.}[dd] & & \cdots \\
    & 4 & & 9 & & 3 & & 3 & & 6 & & 4 & \\
  \cdots  & & 6 \ar@{.}[dd] & & 7 & & 1 & & 4 & & 6 \ar@{.}[dd] & & \cdots \\
    & 6 & & 4 & & 2 & & 1 & & 4 & & 9 & \\
   \cdots & & 4 & & 1 & & 2 & & 1 & & 5 & & \cdots \\
               }
\]
The determinants of four adjacent numbers
\[ 
  \xymatrix @-2.0pc @! {
  & b & \\
  a & & d \\
  & c & \\
               }
\]
forming a diamond are given by $ad-bc$ and in this example, they are either $0,1,$ or $6.$ \demo
\end{Example}

\begin{Remark}
Given a Ptolemy diagram, there is a generalised Caldero-Chapoton map associated with that Ptolemy diagram. When that Ptolemy diagram is a single-cell clique, for different size polygons we arrive at the following values as shown on the relevant Auslander-Reiten quiver (the determinants of each diamond are written in the circles, and boundaries have $\rho$ values of $1$). 
\\ {\textit{(1+3)-gon}}
\[ \def\objectstyle{\scriptstyle}
  \xymatrix @-1.2pc @! {
  & & & & \\
  & 2 \ar@{.}[u] \ar@{.}[d] & *+[o][F]{3} & 2 \ar@{.}[u] \ar@{.}[d] & \\
  & & & &
               }
\]
\\ {\textit{(2+3)-gon}}
\[ \def\objectstyle{\scriptstyle}
  \xymatrix @-1.2pc @! {
  & & & & & & & & \\
  & & 3 & *+[o][F]{6} & 3 & *+[o][F]{6} & 3 & & \\
  & 3 \ar@{.}[uu] \ar@{.}[d] & *+[o][F]{6} & 3 & *+[o][F]{6} & 3 & *+[o][F]{6} & 3 \ar@{.}[uu] \ar@{.}[d] & \\
  & & & & & & & & 
               }
\]
\\ {\textit{(3+3)-gon}}
\[ \def\objectstyle{\scriptstyle}
  \xymatrix @-1.2pc @! {
  & & & & & & & & \\
 & 4 \ar@{.}[u] & *+[o][F]{10} &4 & *+[o][F]{10} & 4&  *+[o][F]{10} &4 \ar@{.}[u] &  \\
  & & 6& *+[o][F]{20} &6 & *+[o][F]{20} &6 & &  \\
  & 4 \ar@{.}[uu] \ar@{.}[d]& *+[o][F]{10} &4 & *+[o][F]{10} & 4& *+[o][F]{10} &4 \ar@{.}[uu] \ar@{.}[d] &  \\
  & & & & & & & & 
               }
\]
\\ {\textit{(4+3)-gon}}
\[ \def\objectstyle{\scriptstyle}
  \xymatrix @-1.2pc @! {
  & & & & & & & & &  \\
  & & 5 & *+[o][F]{15} & 5 & *+[o][F]{15} & 5 & *+[o][F]{15} & 5 \ar@{.}[u] &  \\
  & 10 \ar@{.}[uu] & *+[o][F]{50} & 10 & *+[o][F]{50} & 10 & *+[o][F]{50} & 10 & &  \\
  & & 10 & *+[o][F]{50} & 10 & *+[o][F]{50} & 10 & *+[o][F]{50} & 10 \ar@{.}[uu] \ar@{.}[dd] &  \\
  & 5 \ar@{.}[uu] \ar@{.}[d] & *+[o][F]{15} & 5 & *+[o][F]{15} & 5 & *+[o][F]{15} & 5 & &  \\
 & & & & & & & & & & 
               }
\]
As anticipated (see Remark \ref{Exa:cliquerectangle}), binomial coefficients appear. The determinants form part of the sequence known as the ``Triangle of Narayana Numbers'', or the ``Catalan Triangle'', see sequence A001263 in the OEIS \citep{OEIS}. This can be seen by noticing the determinants are of the form
$${{n}\choose{k}}^2 - {{n}\choose{k-1}}{{n}\choose{k+1}}$$
and then utilising the following proposition.
\begin{Proposition}
Suppose $k,n \in \BN$ and $0<k<n$. Then
$$\frac{1}{k+1}{{n}\choose{k}}{{n+1}\choose{k}} = {{n}\choose{k}}^2 - {{n}\choose{k-1}}{{n}\choose{k+1}}.$$
\end{Proposition}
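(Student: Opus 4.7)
The plan is to treat this as a pure algebraic identity about binomial coefficients and verify it by normalizing both sides into a common form expressed in terms of $\binom{n}{k}^{2}$. The only tools I need are the elementary ratios
$$
\binom{n}{k-1} = \binom{n}{k}\cdot\frac{k}{n-k+1}, \qquad \binom{n}{k+1} = \binom{n}{k}\cdot\frac{n-k}{k+1}, \qquad \binom{n+1}{k} = \binom{n}{k}\cdot\frac{n+1}{n-k+1},
$$
all three of which are immediate from the factorial definition, so no separate lemma is needed.

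First I would simplify the right-hand side. Substituting the first two ratios gives
$$
\binom{n}{k-1}\binom{n}{k+1} \;=\; \binom{n}{k}^{2}\cdot\frac{k(n-k)}{(k+1)(n-k+1)},
$$
so
$$
\binom{n}{k}^{2} - \binom{n}{k-1}\binom{n}{k+1} \;=\; \binom{n}{k}^{2}\cdot\frac{(k+1)(n-k+1)-k(n-k)}{(k+1)(n-k+1)}.
$$
A one-line expansion shows $(k+1)(n-k+1)-k(n-k) = n+1$, so the right-hand side collapses to $\binom{n}{k}^{2}\cdot\dfrac{n+1}{(k+1)(n-k+1)}$.

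Next I would rewrite the left-hand side using the third ratio: $\frac{1}{k+1}\binom{n}{k}\binom{n+1}{k} = \binom{n}{k}^{2}\cdot\dfrac{n+1}{(k+1)(n-k+1)}$, which is exactly the simplified form of the right-hand side, completing the proof.

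There is no real obstacle here; the only point worth flagging is the role of the hypothesis $0<k<n$, which keeps $\binom{n}{k\pm1}$ meaningful and ensures the denominators $k+1$ and $n-k+1$ appearing in the ratio identities are nonzero, so every manipulation above is justified.
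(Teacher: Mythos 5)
Your proposal is correct, and it is precisely the ``straightforward computation'' that the paper invokes without spelling out: reduce everything to a common multiple of $\binom{n}{k}^{2}$ via the elementary adjacent-binomial ratios and compare. Since the paper gives no details, your write-up simply fills in the routine verification the paper refers to.
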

The proof of the proposition is given by straightforward computation. Comparing the expression on the left-hand-side with the formula given in the OEIS gives the result. \demo
\end{Remark}
  \bibliographystyle{is-plain}
  \bibliography{../references}  

\section*{Acknowledgements}
This work was supported in part by the EPSRC Centre for Doctoral Training in Data Science, funded by the UK Engineering and Physical Sciences Research Council (grant reference 1244024). The author is very grateful.

\end{document}